\documentclass[10pt,reqno]{amsart}

\usepackage{amsmath,amssymb,amsthm,graphicx,mathrsfs}
\usepackage{url}
\usepackage{xcolor}

\usepackage[T1]{fontenc}
\usepackage[utf8]{inputenc}
\usepackage[english]{babel}

\usepackage[margin=1.5in]{geometry}

\usepackage{booktabs}

\usepackage[section]{placeins}

\usepackage[colorlinks=true,linkcolor=black,citecolor=black,urlcolor=magenta]{hyperref}
\newcommand\colorurl[2]{{\color{#1}\url{#2}}}

\usepackage{pgfplots}

\usepackage[ruled]{algorithm2e}



\newtheorem*{thm}{Theorem}

\theoremstyle{definition}

\newtheorem*{rmk}{Remark}

\newcommand{\N}{\mathbb N}


\makeatletter
\def\@setauthors{%
  \begingroup
  \def\thanks{\protect\thanks@warning}%
  \trivlist
  \centering\large \@topsep30\p@\relax
  \advance\@topsep by -\baselineskip
  \item\relax
  \author@andify\authors
  \def\\{\protect\linebreak}%
  \authors%
  \ifx\@empty\contribs
  \else
    ,\penalty-3 \space \@setcontribs
    \@closetoccontribs
  \fi
  \endtrivlist
  \endgroup
}
\def\@settitle{\begin{center}%
  \baselineskip14\p@\relax
    \normalfont\LARGE
  \@title
  \end{center}%
}
\makeatother

\pgfplotsset{compat=1.18}

\begin{document}

\date{\today}

\title[Warehouse storage and retrieval optimization]{Warehouse storage and retrieval optimization via clustering, dynamical systems modeling, and GPU-accelerated routing}

\author{Magnus Bengtsson}
\address[Magnus Bengtsson]{Department of Engineering, University of Bor{\aa}s, SE-501 90 Bor{\aa}s, Sweden}
\email{magnus.bengtsson@hb.se}

\author{Jens Wittsten}
\address[Jens Wittsten]{Department of Engineering, University of Bor{\aa}s, SE-501 90 Bor{\aa}s, Sweden}
\email{jens.wittsten@hb.se}

\author{Jonas Waidringer}
\address[Jonas Waidringer]{Department of Engineering, University of Bor{\aa}s, SE-501 90 Bor{\aa}s, Sweden}
\email{jonas.waidringer@hb.se}

\begin{abstract}
This paper introduces a warehouse optimization procedure aimed at enhancing the efficiency of product storage and retrieval. By representing product locations and order flows within a time-evolving graph structure, we employ unsupervised clustering to define and refine compact order regions, effectively reducing picking distances. We describe the procedure using a dynamic mathematical model formulated using tools from random dynamical systems theory, enabling a principled analysis of the system's behavior over time even under random operational variations. For routing within this framework, we implement a parallelized Bellman-Ford algorithm, utilizing GPU acceleration to evaluate path segments efficiently. To address scalability challenges inherent in large routing graphs, we introduce a segmentation strategy that preserves performance while maintaining tractable memory requirements. Our results demonstrate significant improvements in both operational efficiency and computational feasibility for large-scale warehouse environments.
\end{abstract}

\maketitle


\section{Introduction}

The exponential growth of the e-commerce sector has necessitated significant advancements in warehouse management strategies. Increasing consumer demand for expedited order fulfillment has placed pressure on logistics operations, and major retailers like Amazon, H\&M, and Walmart have made substantial investments in automation and AI-driven warehouse management systems (WMS) to enhance efficiency. One of the primary cost drivers in warehouse operations is order picking, accounting for approximately 60\% of total warehouse expenses \cite{kordos2020optimization}. It directly impacts labor costs, order fulfillment speed, and overall supply chain performance. A well-optimized picking strategy minimizes travel distance while maintaining flexibility to accommodate variations in order composition over time. Since order-picking routes can be represented as graphs, where nodes correspond to product locations, determining an optimal route for each order is a combinatorial optimization problem. To further enhance efficiency, order clustering strategies based on product location and demand frequency have been proposed, allowing frequently co-ordered products to be relocated closer together, thereby reducing long-term travel distances~\cite{mirzaei2021impact}.

Shortest path algorithms play a key role in optimizing order picking. While Dijkstra’s algorithm~\cite{dijkstra1959note} is a well-known approach, its sequential nature limits its efficiency for large-scale problems. The Bellman-Ford algorithm \cite{bellman1958routing,ford1956network}, in contrast, is better suited for parallel computing \cite{hajela2014parallel} and has been successfully implemented on GPUs to accelerate shortest path computations \cite{busato2015efficient}. However, in real-world warehouse settings, order picking involves combinatorial optimization, where factorial growth in possible routes makes exhaustive evaluation impractical. In fact, finding a shortest path for picking operations is a version of the traveling salesman problem (TSP). Since the TSP is NP-complete \cite{cormen2022introduction}, approximate methods are often necessary in practice.

In addition, warehouse managers frequently encounter unpredictable demand patterns and inventory fluctuations, challenges that static optimization methods struggle to address effectively. Instead, stochastic modeling approaches have been shown to significantly enhance logistics decision-making, particularly regarding inventory positioning and order fulfillment \cite{govindan2015reverse}. This is exemplified in Amazon's fulfillment centers where stochastic modeling is incorporated in their robotic picking algorithm to balance pick efficiency and order cycle times. While clustering in the traditional data science sense isn't explicitly used, the algorithm conceptually groups orders based on various criteria to optimize efficiency \cite{allgor2023algorithm}.

Derived from the analysis above, this study proposes an advanced warehouse optimization framework that integrates graph data\-bases, unsupervised clustering, and GPU-accelerated route optimization with the Bellman-Ford algorithm. Unlike conventional WMS that rely on static zone allocations, our approach continuously repositions products based on demand fluctuations. This research extends prior work in warehouse route optimization and clustering methodologies \cite{silva2020integrating,bengtsson2022proposed} by employing a random dynamical systems (RDS) framework that provides a rigorous mathematical foundation for analyzing storage and retrieval efficiencies and modeling uncertainty and variability. By leveraging GPU-based computing, our system can handle large-scale warehouses and high order volumes more effectively.

The system employs clustering on two levels: (1) Order clustering, which reorganizes storage layouts to concentrate frequently co-ordered items and minimize long-term travel distance, and (2) Picking node clustering, which reduces computational complexity in route optimization while maintaining near-optimal results. These techniques are analyzed mathematically in Sections \ref{sec:model} and \ref{sec:routeoptimization} and evaluated through numerical experiments in Sections \ref{sec:numericalexperiments} and . By continuously adapting storage configurations and applying combinatorial optimization strategies, this approach balances short-term efficiency gains with long-term structural improvements, ensuring that warehouse operations remain effective even amid fluctuating order patterns.

To describe the implementation of our system in full detail we develop a mathematical model found in Section \ref{sec:model}. The approach can be summarized as follows:

Products in the warehouse are organized using a graph database, which allows for real-time tracking of their positions. This system ensures that the location of each product is always known, facilitating efficient management of stock levels and movement. Incoming orders are clustered based on their similarity and proximity, grouping orders that share common product types or are located near each other within the warehouse. This clustering is also taken into account for calculating picking routes, which minimizes the travel distance for picking multiple orders, thereby increasing efficiency.

For each cluster of orders, a cluster center is determined. This center represents the optimal location within the warehouse for storing products that are frequently included in the cluster's orders. The calculation of the cluster center takes into account the positions of all products in the cluster and aims to minimize the overall distance required for picking. When a product position is emptied during the picking process, the system triggers a reorganization of the warehouse. The product is restocked at a new position that is closer to the relevant cluster center, ensuring that products are always stored in positions that reduce the travel distance for future picking operations.

\subsection{Related work}\label{ss:relatedworks}

A central principle in classical storage location assignment problem (SLAP) is the \emph{ABC classification} of inventory items according to turnover or demand frequency. In this approach, products are divided into a small number of categories---typically three---such that high-demand (A) items are placed in easily accessible locations, while medium- and low-demand (B and C) items occupy progressively less strategic zones. This strategy has long been recognized as effective for improving picker travel efficiency in picker-to-parts systems~\cite{mirzaei2021impact}.

In our study, we adopt $K=3$ clusters to reflect this well-established ABC structure. The intent is not to optimize the number of clusters per se, but to model how a warehouse that follows an ABC-type policy can reorganize its inventory dynamically as demand patterns evolve. The clustering thus serves as an operational analog of ABC zoning under dynamic conditions rather than as a data-driven search for an optimal $K$. For this purpose, we employ the $K$-means algorithm due to its simplicity, interpretability, and computational efficiency. More sophisticated clustering methods (e.g., density-based or hierarchical approaches) could in principle be used, but the aim here is to demonstrate the feasibility and benefits of cluster-driven reorganization rather than to compare clustering techniques. Future work may consider alternative clustering methods within the same framework to explore their potential impact on convergence speed and stability.

Classical research on order picking, as reviewed by de Koster et al.~\cite{de2007design}, highlights the importance of labor and travel-time considerations in warehouse performance. In contrast, the present work abstracts from such operational constraints---including labor assignment, congestion, and time scheduling---to focus on the structural dynamics of storage-location assignment and product reorganization. Our model assumes unrestricted relocation and omits explicit labor capacity limits, allowing us to isolate and analyze the structural effects of clustering on order consolidation and route optimization. Extending the framework to incorporate labor constraints or relocation delays would represent a natural direction for future research.

While classical studies of picker-to-parts systems emphasize operational batching---where multiple customer orders are grouped to minimize picker travel distance \cite{de2007design}---our approach performs a structural analog of this process. By clustering orders and their associated article types based on spatial and demand similarity, the model effectively performs a dynamic, data-driven form of batching at the storage-location level. This allows reorganization of stock positions that facilitates future co-picking, extending the traditional batching concept from route planning to layout adaptation.

Recent research has also highlighted a growing interest in adaptive storage assignment policies. Rizqi and Chou~\cite{rizqi2025dynamic} integrate a mathematical optimization model with discrete-event simulation, using the former to propose storage and picking strategies and the latter to evaluate performance under realistic warehouse dynamics. Other approaches have focused on reinforcement learning: Teck~et~al.~\cite{teck2025deep} employ deep reinforcement learning (proximal policy optimization, PPO) to optimize rack storage and routing in parts-to-picker systems, while Yerlikaya and Ar{\i}kan~\cite{yerlikaya2024novel} introduce a multi-criteria class-based storage framework that adapts dynamically to order profiles. In contrast to these simulation- and RL-based frameworks, our contribution provides a tractable mathematical model for picker-to-parts systems that incorporates dynamic product reorganization through clustering. Performance is validated through large-scale numerical experiments supported by GPU-based computation, demonstrating the scalability and stability of the approach.

From a computational perspective, our GPU implementation of the routing component uses the Bellman--Ford algorithm for single-source shortest-path computation (SSSP). While alternative GPU-oriented algorithms such as $\Delta$-stepping can achieve higher asymptotic performance through fine-grained load balancing and reduced synchronization overhead~\cite{busato2015efficient}, Bellman--Ford offers superior simplicity and generality. Its straightforward parallelization in CUDA enables transparent and reproducible implementation without reliance on external graph-processing libraries. Since routing constitutes only one step within the integrated clustering--RDS framework, Bellman--Ford provides an appropriate and stable baseline, with more specialized GPU SSSP methods left for future large-scale extensions.

Finally, the broader evolution of warehouse research toward data-driven and cyber-physical systems has been comprehensively reviewed by Boysen et al.~\cite{boysen2019warehousing}. They highlight how digitalization, sensor data, and real-time analytics enable adaptive storage assignment and control strategies, often under the umbrella of ``Warehousing 4.0.'' Our framework aligns with this development by integrating clustering-based reorganization and GPU-accelerated routing into a mathematically transparent model, bridging traditional optimization formulations with scalable, data-oriented computation.

\section{Mathematical model}\label{sec:model}

We describe our warehouse optimization method and our numerical experiments using dynamical systems theory. 
We first introduce the state space (describing the different states of the warehouse) and give the definition of orders.

\subsection{The state space}

We consider a situation where $N$ types of articles $a_1,a_2,\ldots,a_N$, are stored in a warehouse at nodes arranged in three-dimensional space representing racks with shelves at different heights. Each shelf on a rack constitutes its own node, and each node is identified with an element in a three-dimensional array having integer coordinates
\begin{equation}\label{eq:dim}
(i,j,k)\in [ 1,n_x] \times [1,n_y]\times [1,n_z]\subset\N^3,
\end{equation}
where $\N=\{0,1,2,\ldots\}$ is the set of natural number, and $n_x\times n_y\times n_z$ is the total number of nodes in the warehouse.  We assume there is at most one article type $a_n$ at each node (some nodes being empty), and identify each article type $a_n$ with the corresponding ordinal $n\in[1,N]$. Empty nodes are assigned article type $0$.

For each article type $n\in[1,N]$, let $M_n$ be the maximum number of parcels that can be stored at an individual node. We assume this number is independent of which node is being considered; it only depends on what type of article is stored at that node. 
Introduce the space $\mathcal A=[0,N]^{n_x\times n_y\times n_z}$ of three-dimensional arrays. Each array $A=(a_{ijk})_{ijk}\in \mathcal A$ corresponds to one particular choice of how to distribute article types in the warehouse. (The array with only zeros represents an empty warehouse with no article types, and the array with only ones is a warehouse that only contains articles of type $a_1$.) Each array element $a_{ijk}$ is thus an integer $n$ between $0$ and $N$, representing the fact that the node at $(i,j,k)$ contains article type $a_n$.

In other words, for each fixed $A=(a_{ijk})_{ijk}\in \mathcal A$, the node $(i,j,k)$ contains articles of type $a_{ijk}\in[0,N]$. As described above, the maximum number of such articles at one node is $M_{a_{ijk}}$. For each $A=(a_{ijk})_{ijk}\in \mathcal A$ we now consider the fiber over $A$,
$$
F_A\mathcal A=\{M:M=(m_{ijk})_{ijk},\ m_{ijk}\in[0,M_{a_{ijk}}]\}.
$$
This describes all the possible inventory balances $M$ (number of parcels of each article type) for a given article type distribution $A$.
The state space is then the fibration defined by the (disjoint) union
$$
X=\bigcup_{A\in\mathcal A} \{A\}\times F_A\mathcal A=\{(A,M):A\in\mathcal A,\ M\in F_A\mathcal A\},
$$
where $A$ varies over all the possible article type distributions. States in $X$ will be denoted by $x$, so that $x=(A,M)\in X$. We may of course also represent $X$ by attaching to each article type the corresponding inventory balance, so that 
$$
X\cong\{(a_{ijk},m_{ijk})_{ijk}: a_{ijk}\in\mathcal [0,N],\ m_{ijk}\in[0,M_{a_{ijk}}]\},
$$
and we will not distinguish between these two representations.

\begin{rmk}
In the description above we assume that nodes have integer coordinates and only allow one article type per node. This is a deliberate simplification made for clarity of presentation and ease of interpretation. At the cost of additional bookkeeping, the model can be generalized in two natural ways: to allow nodes with real-valued coordinates, and to allow multiple article types per node (commonly referred to as stock keeping units, or SKUs, in the warehouse literature).

First, multiple SKUs may be assigned to the same node by introducing an additional shelf index $l$, so that
\[
(i,j,k,l)\in[1,n_x]\times[1,n_y]\times[1,n_z]\times[1,n_w]\subset\mathbb{N}^4,
\]
where $n_w$ denotes shelf capacity. Allocation then corresponds to a standard bin-packing problem, or to a rectangle-packing problem if shelves are treated as two-dimensional footprints~\cite{coffman1984approximation,wascher2007improved,gue2006effects,petersen2004comparison}. 

Second, node positions may be generalized to real coordinates by introducing a mapping
\[
f_{\mathrm{coord}} : \mathbb{N}^3 \to \mathbb{R}^3, \qquad 
(i,j,k) \mapsto \bigl(x(i,j,k),y(i,j,k),z(i,j,k)\bigr),
\]
so that route optimization is performed on the resulting continuous layout rather than the integer lattice in~\eqref{eq:dim}. 

While these extensions would make the model more realistic, they are beyond the scope of this paper. Our focus here is on the clustering of orders and picking nodes, and the simplified setting suffices for this purpose. More general formulations, such as multi-SKU nodes or continuous coordinates, are natural directions for future work.
\end{rmk}

\subsection{Orders}

An order $o$ is a vector 
$$
o=(m_1,\ldots,m_N)\in\N^N
$$
where $m_n\in\N$ is the number of parcels of article $a_n$ contained in the order. (Here we shall by an order mean any order---individual customer orders or a collection of customer orders---that has been decided should be picked from the warehouse.) Given a state $x=(A,M)$ and an order $o$, the Order Processing algorithm (defined in Algorithm \ref{alg:process_order} in \ref{sec:WMS} below) and a route optimization algorithm (which involves the proposed clustering step based on similarity and proximity, see \S\ref{ss:routes}) provide the nodes from which the order should be collected, along with the picking route between these nodes. The first part is described by a map $w$ given by
\begin{equation}\label{eq:w}
w:(x,o)\mapsto w(x,o)=(w_{ijk})_{ijk}\in\N^{n_x\times n_y\times n_z},
\end{equation}
where $w_{ijk}$ is the number of parcels that should be collected from node $(i,j,k)$, and the total number of parcels in the order is $\Sigma_{i,j,k} w_{ijk}=m_1+\ldots+m_N$. The Update Stock algorithm (Algorithm \ref{alg:update_stock}) then calculates the intermediate state
\begin{equation}\label{eq:faux}
(A,M')=(A,M-w(x,o)).
\end{equation}
The Check Stock algorithm (Algorithm \ref{alg:check_stock}) verifies if there is sufficient stock to fill the order without picking the last parcel on a shelf, i.e., whether $m_{ijk}> w_{ijk}$ for all $(i,j,k)$. If this is not the case, i.e., if $m_{ijk}\le w_{ijk}$ for some $(i,j,k)$ such that $w_{ijk}\ge1$, then the Check Stock and Move Articles algorithm of the warehouse optimization routine (Algorithm \ref{alg:check_and_move}) computes a new warehouse location $(i',j',k')$ at which to restock article $a_{ijk}$ (collecting from the new location after emptying the balance at the old location), and the article array $A$ is updated to $A'$ in which $a_{ijk}$ is replaced by $0$, and $a_{i'j'k'}$ is changed from $0$ to $a_{ijk}$. This gives an updated state
\begin{equation}\label{eq:f}
x'=f(x,o):=(A',M'),
\end{equation}
which is the new state of the warehouse after having completed picking of the order $o$.
If no nodes become empty when filling the order then the intermediate state \eqref{eq:faux} is taken as the the new state $x'$ in \eqref{eq:f}.

\subsection{Random dynamical systems}\label{ss:rds}
To capture the stochastic evolution of warehouse states under repeated picking and resorting, we formulate the system as a random dynamical system (RDS), which provides a compact mathematical framework for modeling how random demand patterns drive successive reorganizations of stored articles over time. Standard mathematical references are Arnold \cite{Arnold} and Strogatz \cite{strogatz2024nonlinear}.

We use discrete time $n\in\N$, where $\N=\{0,1,2,\ldots\}$ as before is the set of natural numbers.
Let $(\Omega,\mathcal F,\mathbb P)$ be a probability space. We take $\Omega$ to be the shift space consisting of sequences $\omega=(o_1,o_2,\ldots)$ of orders $o_n$, and let $\theta:\Omega\to\Omega$ to be the shift operator $\theta\omega=(o_2,o_3,\ldots)$.
This models a sequence of orders $\omega\in\Omega$, placed with probability $\mathbb P$, coming in to the warehouse to be filled one after the other, with $\theta$ driving the flow of orders.
Let $X$ be the state space describing different states of the warehouse. Let $F:\omega\mapsto F_\omega$ be a random variable from $\Omega$ to the set of maps on $X$, i.e., for all $\omega\in\Omega$, $F_\omega$ is a map $F_\omega:X\to X$.
Consider the {\it skew product}
defined by $\Theta(\omega,x)=(\theta\omega,F_\omega(x))$. Iterating $\Theta$ gives
$$
\Theta^n(\omega,x)=(\theta^n\omega,F_{\theta^{n-1}\omega}\circ F_{\theta^{n-2}\omega}\circ \cdots\circ F_\omega(x))
$$
for $n\ge1$, where $\circ$ denotes composition of functions. The second coordinate is a {\it random dynamical system over $\theta$.}

We let $f(x,o)$ be the function in \eqref{eq:f} describing how the state $x$ evolves to $x'=f(x,o)$ by filling the order $o$ once, then, if necessary, restocking articles at new locations as determined by the optimization routine. With $\omega=(o_1,o_2,\ldots)$ we then let $F_\omega(x)=f(x,o_1)$, so that $F_{\theta^{n-1}\omega}(x)=f(x,o_n)$ for $n\ge1$.
The simplest example is when $\omega=(o,o,\ldots)$, corresponding to the same order $o$ being placed over and over (see Experiment 1 in \S\ref{sec:numericalexperiments} below).
To account for small differences in individual orders we will also consider the random example when $\omega=(o_1,o_2,\ldots)$ and the orders $o_n$ are independent, identically distributed perturbations of some fixed order $o$ (see Experiments 2 and 3 in \S\ref{sec:numericalexperiments}).
In this case $\Theta^n(\omega,x_0)$ describes how the (initial) warehouse state $x_0$ evolves to
\begin{equation}\label{eq:xn}
x_n=F_{\theta^{n-1}\omega}\circ\cdots\circ F_{\omega}(x_0)=f(\bullet,o_n)\circ\cdots\circ f(\bullet,o_1)(x_0)
\end{equation}
by filling the first $n$ orders in $\omega$, having potentially restocked articles after each order at new locations as determined by the optimization routine. $\Theta^n(\omega,x_0)$ also contains the information that the next order to be filled is the first entry in $\theta^n\omega=(o_{n+1},o_{n+2},\ldots)$, that is, $o_{n+1}$. We also note that in this case, the random difference equation
$$
x_n=F_{\theta^{n-1}\omega}(x_{n-1}),\quad n\ge1, \quad x_0\in X,
$$
generates a Markov chain $(x_n)_n$ with transitional probability
$P(x,B)=\mathbb P\{\omega:F_{\omega}(x)\in B\}$, cf.~\cite{Kifer}.
Iteration of the picking and restocking process for a sequence of orders $o_1,o_2,\ldots$ in this way is handled by the Main Order Processing algorithm (Algorithm \ref{alg:main}).

\subsection{Routes and clusters}\label{ss:routes}

Given a state $x$ and an order $o$, a route is an ordering of the elements in the set of picking nodes
$$
S_{\rm pick}=S_{\rm pick}(x,o)=\{(i,j,k):w_{ijk}\ne0\}
$$
determined by \eqref{eq:w}. Routes are determined so that picking is completed from nodes on the same rack first (i.e., nodes that have the same $(i,j)$-coordinates) before moving on to the next rack. The stops along the route can then be described as a given ordering of the elements in the set of stopping nodes
\begin{equation*}
S_{\rm stop}=S_{\rm stop}(x,o)=\{(i,j):\text{ there exists $k$ such that }(i,j,k)\in S_{\rm pick}\},
\end{equation*}
and the number of stops is
\begin{equation}\label{eq:stops}
n=\#S_{\rm stop}.
\end{equation}
At stopping nodes with more than one picking node, we can pre-determine the order in which picking is carried out---e.g., top to bottom---so this does not have to not included in the route optimization. We will therefore not distinguish between picking routes and routes between stopping nodes.

\begin{figure}
\begin{center}
\includegraphics[width=0.98\textwidth]{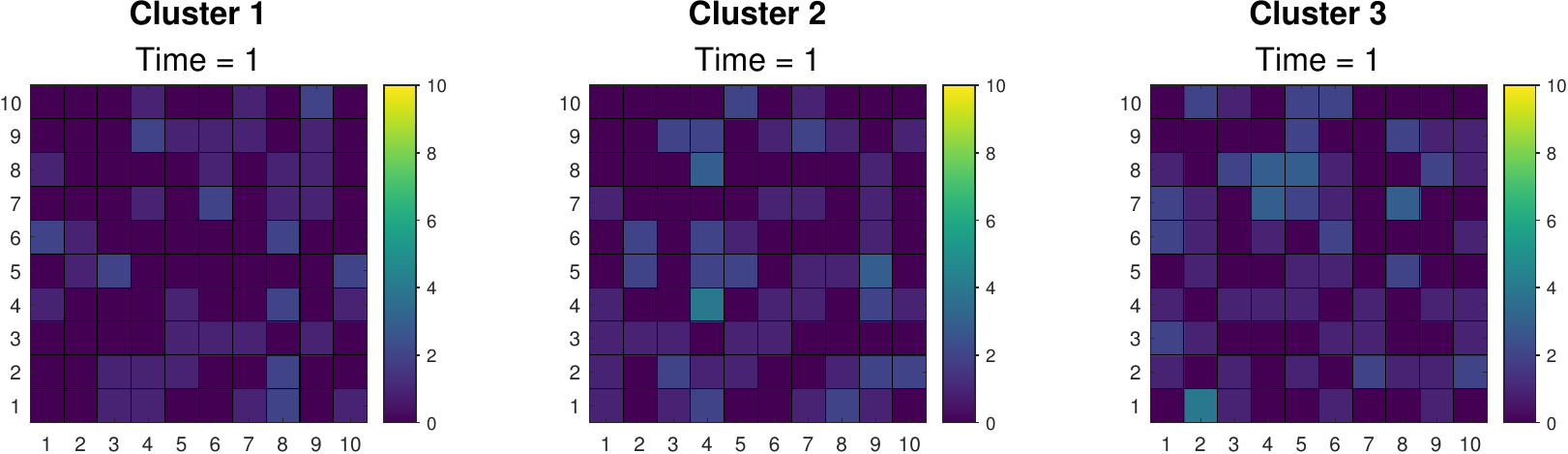}

\vspace{4mm}

\includegraphics[width=0.98\textwidth]{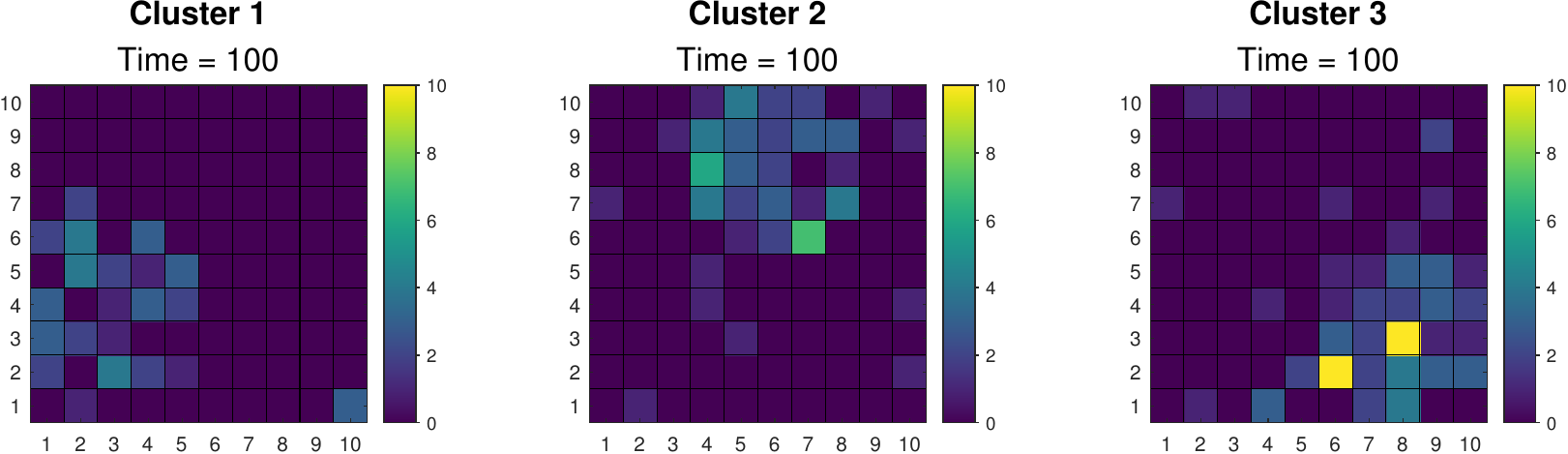}
\caption{Each of the top panels show one of three clusters resulting from $K$-means clustering applied to an order with 200 article types given a randomized initial warehouse state $x_0$. The nodes in the warehouse have coordinates $(i,j,k)$ with $i,j,k=1,\ldots,10$. Stopping nodes are indicated with color coding describing how many picking nodes a given stopping node contains. Each of the bottom panels show one of three clusters resulting from $K$-means clustering applied to the same order being picked for the 100:th time, that is, given the warehouse state $x_{99}$ as in \eqref{eq:xn}, obtained after 99 picking iterations. In the top panels, all three clusters are spread out over the entire warehouse with much overlap between clusters. In contrast, the clusters in the bottom panels are concentrated and much better separated. \label{fig1and2}}
\end{center}
\end{figure}

As explained in the introduction, we propose to group orders into clusters based on similarity and proximity to reduce computational complexity while still optimizing picking. The separation and concentration of the clusters is improved as time progresses by taking clustering into account when restocking. The algorithm restocks an article at an empty picking node whose $(x,y)$-coordinate (i.e., stopping node) is closest to the cluster center of the corresponding cluster. This reduces the area over which each cluster is spread out. Figure \ref{fig1and2} shows an example of this phenomenon, which we will investigate further in Section \ref{sec:numericalexperiments} by performing several numerical experiments. Each of the top panels in Figure \ref{fig1and2} show one of three clusters resulting from applying $K$-means clustering on a picking order $o$ consisting of 200 article types, given a randomized initial warehouse state $x_0$. The nodes in the warehouse have coordinates $(i,j,k)$ with $i,j,k=1,\ldots,10$. The stopping nodes in each cluster are indicated with color coding describing how many picking nodes there are at each stopping node. The three clusters are all more or less spread out over the entire square. In contrast, the bottom panels show $K$-means clustering when the same order $o$ is going to be picked for the 100:th time; the warehouse state is then $x_{99}$ with $x_n$ as in \eqref{eq:xn}, obtained after applying the resorting algorithm having already picked this order 99 times. Article types have now been restocked at new positions that are closer to the relevant cluster center, and clusters are clearly distinguishable and well separated. 

To make this mathematically precise we introduce the following definition. 
Let $m$ be the number of clusters. 
Given a state $x$ and an order $o$, a cluster is a subset of $S_{\rm pick}$ which we will denote by
$$
S_{\rm pick}(\ell)=S_{\rm pick}(\ell;x,o)\subset S_{\rm pick},\quad 1\le \ell\le m,
$$
such that the $S_{\rm pick}(\ell)$ are mutually disjoint for $1\le \ell\le m$ and
$$
S_{\rm pick}=\bigcup_{\ell=1}^m S_{\rm pick}(\ell).
$$
The stops in cluster $\ell$ are
\begin{equation}\label{eq:clusterstops}
S_{\rm stop}(\ell)=S_{\rm stop}(\ell;x,o)=\{(i,j):\text{ there exists $k$ such that }(i,j,k)\in S_{\rm pick}(\ell)\}.
\end{equation}
Note that the sets $S_{\rm stop}(1),\ldots,S_{\rm stop}(m)$ are not mutually disjoint in general, which means that a rack in the warehouse can belong to more than one cluster. In \eqref{eq:clusterstops}, the number of $k$'s such that $(i,j,k)\in S_{\rm pick}(\ell)$ is the picking frequency at stop $(i,j)$ for cluster $\ell$. By taking picking frequency into account, clusters can be represented in two dimensions as in Figure \ref{fig1and2}, where the frequency is represented by the color coding of each stopping node.

We remark that we define clusters mathematically as a subset of the picking nodes for a given order, as opposed to dividing the picking nodes from a collection of orders into different groups or clusters. But since an order here can refer to the total collection of individual customer orders to be picked at a certain time our definition covers also the second scenario.

Because we will study routes between stopping nodes, we define cluster centers as the expected value of the location of stopping nodes in each cluster, taking picking frequency into account.
The cluster centers are then given for cluster $\ell$ by
$(\bar x(\ell),\bar y(\ell))$ where
$$
\bar x(\ell)=\frac{\sum_{(i,j,k)\in S_{\rm pick}(\ell)} i}{\# S_{\rm pick}(\ell)},\quad \bar y(\ell)=\frac{\sum_{(i,j,k)\in S_{\rm pick}(\ell)} j}{\# S_{\rm pick}(\ell)},
$$
with covariance matrix
\begin{equation}\label{eq:covariancematrix}
\Sigma(\ell)=\begin{pmatrix}\sigma_{xx}^2(\ell) & \sigma_{xy}^2(\ell)\\ \sigma_{xy}^2(\ell)&\sigma_{yy}^2(\ell)\end{pmatrix}.
\end{equation}
Here the variances and covariance are given by
\begin{gather*}
\sigma_{xx}^2(\ell)=\frac{\sum_{(i,j,k)\in S_{\rm pick}(\ell)} (i-\bar x(\ell))^2}{\# S_{\rm pick}(\ell)},
\\
\sigma_{yy}^2(\ell)=\frac{\sum_{(i,j,k)\in S_{\rm pick}(\ell)} (j-\bar y(\ell))^2}{\# S_{\rm pick}(\ell)},
\\
\sigma_{xy}^2(\ell)=\frac{\sum_{(i,j,k)\in S_{\rm pick}(\ell)} (i-\bar x(\ell))(j-\bar y(\ell))}{\# S_{\rm pick}(\ell)}.
\end{gather*}

To study how clusters evolve as we apply our resorting procedure to a sequence of orders $o_1,o_2,\ldots$ we let $\Sigma_{n}(\ell)$ be the covariance matrix of cluster $\ell$ as in \eqref{eq:covariancematrix} defined for a state $x_{n-1}$ and order $o_{n}$, $n\ge1$. Here $x_n$ is given by \eqref{eq:xn}, so that $x_n$ is the warehouse state obtained after filling the first $n$ orders in the sequence $o_1,o_2,\ldots$. This will be used in Section \ref{sec:numericalexperiments} where we illustrate the time evolution of the mean and variance of clusters by plotting bivariate Gaussian probability density functions with corresponding mean and covariance matrices as $n$ increases, see Figures \ref{fig:exp1}, \ref{fig:exp2} and \ref{fig:exp3}.

\begin{rmk}
In the formulation above, relocations are assumed to be unrestricted, allowing any article to move freely between storage positions at each iteration. This simplifying assumption isolates the structural effects of clustering and routing from operational factors such as labour capacity or relocation delays, which are discussed as possible extensions in Section~\ref{sec:discussion}.
\end{rmk}

\section{Route Optimization}\label{sec:routeoptimization}

In addition to clustering orders for reorganizing storage layouts, this paper presents a procedure for optimizing order picking routes by using clustering also within a given picking order. In the language of our mathematical model, we recall that a picking route is a route with total number of stops in the $xy$-plane given by $n$ in \eqref{eq:stops}. If there are more than one picking node at a picking stop (cf.~\S\ref{ss:routes}) we pre-define the picking order for such nodes so this does not have to be included when calculating the route. However, we recall that the frequency of picking at each stop does matter, and that the restocking routine biases toward increased frequency. Over time (i.e., upon iteration of the resorting algorithm) this leads clusters to become more concentrated with smaller area and fewer stops, which in turn leads to a reduction in both travel distance and computational complexity. However, finding an optimal route can be a daunting task even after the number of stops have been reduced in this way, so in this section we discuss a technique to reduce computational complexity while maintaining near optimal results when calculating the route at each iteration of the resorting algorithm. We showcase the technique at the end of Section \ref{sec:numericalexperiments}.

For large orders, where the computational complexity of route optimization can be $O(n!)$, we propose to use a GPU-based implementation of the Bellman-Ford algorithm to compute the route. This approach allows for efficient handling of large datasets by significantly reducing computational time. For clarity of exposition we will assume that the entrance and exit are the same in the warehouse, and that these are not part of the route. Routes will then not be closed curves, and we will treat two routes as identical if they differ only in their travel direction. The number of possible routes between $n$ picking stops (i.e., stopping nodes) is then $\frac12 n!$. To reduce computing costs we propose to use unsupervised machine learning to divide the picking stops into $m$ clusters, then finding optimal routes in each cluster, and finally patching these routes together. This has the possibility of substantially reducing the computational complexity. For simplicity we begin with a naive discussion that doesn't leverage any other sophisticated algorithms to reduce complexity.

\begin{thm}\label{thm:reduction}
Let $n$ be the number of picking stops, and let these be divided into $m$ pairwise disjoint clusters, each containing $n_1,\ldots,n_m$ picking stops, where $n_1+\ldots+n_m=n$. If an undirected non-closed route has been determined for each cluster then the total number of possible global undirected routes connecting them is $m!\times 2^{m-1}$. The total cost of also finding optimal routes in each individual cluster is therefore
$$
m!\times 2^{m-1}+\frac12(n_1!+\ldots+n_m!).
$$
\end{thm}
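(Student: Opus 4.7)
The plan is to prove the theorem as a two-part combinatorial counting argument: first I would enumerate the number of undirected global routes obtained by stringing together already-fixed intra-cluster routes, then I would count the brute-force cost of determining an optimal undirected non-closed route within each individual cluster, and finally add the two contributions to obtain the stated total.

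For the first part, I would view each of the $m$ clusters as a ``super-node'' endowed with two distinguishable endpoints, which is possible because its internal route is a fixed non-closed path. A directed global route visiting every cluster exactly once is then specified by (i) a permutation of the $m$ clusters and (ii) for each cluster, a choice of which of its two endpoints is the entry point. The permutation contributes a factor of $m!$ and the orientation choices contribute a factor of $2^m$. Since the theorem asks for undirected global routes, I would quotient by the involution that reverses the entire route; this reversal simultaneously inverts the permutation of clusters and flips every cluster's internal orientation, so it is fixed-point free on the set of directed arrangements and identifies them in pairs. The count therefore becomes $m! \cdot 2^m / 2 = m! \cdot 2^{m-1}$, as claimed.

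For the second part, I would apply the same idea at the intra-cluster level: inside a cluster with $n_i$ stops, a directed Hamiltonian path corresponds to a permutation of the $n_i$ stops, giving $n_i!$ directed routes, and the undirected count is $n_i!/2$ after quotienting by the route-reversal symmetry. A naive exhaustive search that evaluates each undirected route once therefore incurs cost $n_i!/2$ per cluster, and summing over $i=1,\ldots,m$ yields $\frac{1}{2}(n_1! + \ldots + n_m!)$. Adding this intra-cluster cost to the $m! \cdot 2^{m-1}$ global combinations gives the total cost stated in the theorem.

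The only genuinely subtle step, and the one I would present most carefully, is the division by $2$ in the undirected global count: one must verify that the global reversal is indeed fixed-point free (which holds because reversing a non-closed path of length $m \geq 1$ always produces a distinct directed arrangement once the entry/exit labels of each cluster are tracked) so that the quotient is clean. The rest of the argument is routine counting, and no further machinery from Section~\ref{sec:model} is needed for the statement as formulated.
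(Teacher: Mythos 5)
Your proof is correct and follows essentially the same argument as the paper: both count $m!\times 2^m$ directed global arrangements (permutation of clusters times an entry-endpoint choice per cluster), halve by route reversal to get $m!\times 2^{m-1}$, and add the per-cluster brute-force cost $\frac12 n_j!$. Your explicit check that the reversal involution is fixed-point free is a minor refinement of the paper's ``half as many'' step, not a different approach.
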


\begin{proof}
Suppose first that an undirected route with two boundary nodes has been found for each cluster. We determine the computational complexity of finding the best global route going through all clusters along these routes using a combinatorial argument. 
First, a starting cluster is chosen which can be done in $m$ ways. From that cluster, a global starting point should be determined from the pair of that cluster's boundary nodes, which can be done in $2$ ways. Next, a second cluster is chosen which can be done in $m-1$ ways. From that cluster, a starting point connecting with the first cluster's end point should be determined from the second cluster's pair of boundary nodes, which can be done in $2$ ways. Continuing this way we see that there are $m!\times 2^m$ ways to choose a global route with a start and end. Hence, there are half as many global undirected routes, namely $m!\times 2^{m-1}$.

Now, for a cluster with $n_j$ picking stops there are $\frac12n_j!$ ways to choose an undirected route. This is independent of the other clusters. Adding these to the cost of choosing a global route completes the proof.
\end{proof}

\begin{figure}
\begin{center}
\includegraphics[width=0.48\textwidth]{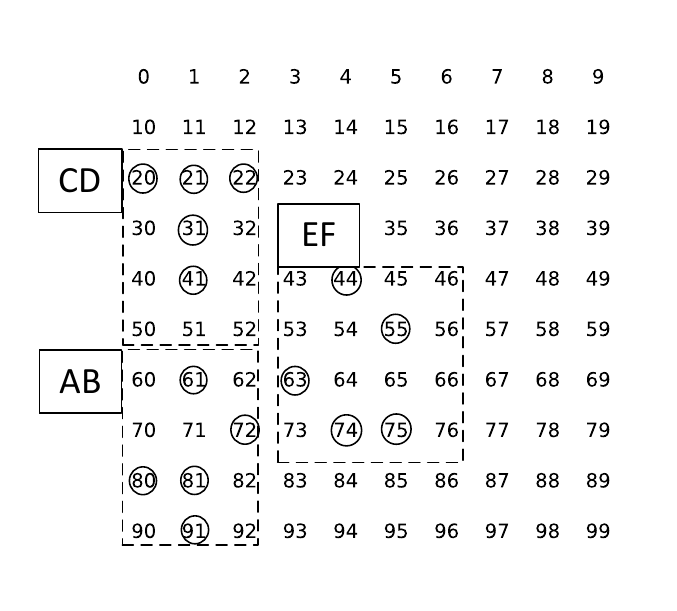}\quad
\includegraphics[width=0.38\textwidth]{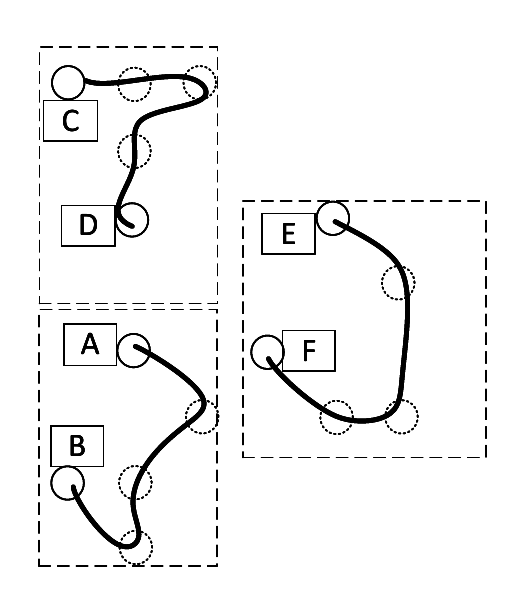}
\caption{An order with 15 stopping positions clustered into 3 regions (left), and the reduced routes for each cluster represented by two boundary nodes (right). For ease of presentation, stopping nodes are labeled from 0 to 99, where node 0 corresponds to $(i,j) = (1,1)$ in Figure \ref{fig1and2}.\label{fig1and2a}}
\end{center}
\end{figure}

The reduction in computational cost described by the theorem is largest when $n_j\sim (n/m)$ for $j=1,\ldots,m$. To give the reader an idea of the magnitude of the reduction in this case, consider a modest $n=12$. Then $12!\approx 5\times 10^{9}$, while if $n_j=n/m$ for $j=1,\ldots,m$ then 
$$
m!\times 2^{m-1}+\frac12(n_1!+\ldots+n_m!)=\begin{cases} 724& m=2,\\ 60& m=3,\\ 204& m=4.\end{cases}
$$
As can be seen, this approach significantly reduces the computational complexity. Although it does not guarantee an optimal route, it ensures that a route close to the optimal one can be achieved within a reasonable computational time.

The case when $n=15$ and $m=3$ is shown in Figure \ref{fig1and2a}. Then $15!\approx 10^{12}$, while $n_1=n_2=n_3=5$ and
$$
m!\times 2^{m-1}+\frac12(n_1!+\ldots+n_m!)= 204.
$$

Of course, we have been comparing the results of the theorem above to a brute force approach for finding optimal routes without employing sophisticated algorithms to reduce computational complexity. However, using for example the Held \& Karp algorithm \cite{held1962dynamic} reduces the time complexity from $\mathcal O(n!)$ to $\mathcal O(n^2\cdot 2^n)$, which, with $n=12$, is still 3 orders of magnitude higher than the examples above---and this without using a corresponding algorithm to find optimal routes within clusters. Hence, the gains in time complexity from the approximate method are still substantial.

\section{Numerical experiments}\label{sec:numericalexperiments}

Here we perform numerical experiments to evaluate the performance of the resorting algorithm for a warehouse model described in Section \ref{sec:model} with the following parameter settings:
\begin{enumerate}
\item \label{settings:1} The warehouse contains $10\times 10\times 10$ picking nodes located at $(i,j,k)\in [1,10]\times [1,10]\times[1,10]$. This means that $n_x=n_y=n_z=10$ in \eqref{eq:dim}. 
\item \label{settings:2} The number of article types is $N=890$, and we assume that the maximum number of articles at a node is $10$ for all article types, so $M_{a_{ijk}}=10$ for all $(i,j,k)$. Thus, the state space $X$ is 
$$
X=\{(a_{ijk},m_{ijk})_{ijk}: a_{ijk}\in[0,890],\ m_{ijk}\in[0,10]\}.
$$
\item \label{settings:3} The initial warehouse state $x_0$ is randomized with the following constraints: It contains $11$ $xy$-coordinate pairs $(i_0,j_0),(i_1,j_1),\ldots,(i_{10},j_{10})$ such that the nodes $(i_0,j_0,k),\ldots(i_{10},j_{10},k)$ are empty for all $k\in [1,10]$. This corresponds to $11$ empty racks, for a total of $110$ empty nodes. A unique article type $a_{ijk}\in[1,890]$ is assigned to each of the remaining $890$ nodes, and we assume that the initial warehouse state is fully stocked, so the balance at each nonempty node $10$. 
\item \label{settings:4} An order $o$ to be picked is a collection of $20$ individual customer or purchase orders. A purchase order consists of $10$ article types with between $1$ and $10$ parcels of each type. The $20$ purchase orders are mutually disjoint, so $o$ consists of $200$ article types. In each picking iteration, the $20$ purchase orders are divided into three groups using $K$-means clustering based on where the articles are stored in the warehouse at that time (with $K=3$ reflecting the classical ABC demand classification scheme, as discussed in Section~\ref{ss:relatedworks}). Each such group is a cluster. An example can be seen in Figure \ref{fig1and2}.
\end{enumerate}

\subsection{Experiment 1}
We first perform picking and resorting when the same order is placed over and over, that is, we let $\omega=(o,o,\ldots)$ and compute the warehouse state 
$$
x_n=F_{\theta^{n-1}\omega}\circ\cdots\circ F_{\omega}(x_0)=f(o,\bullet)\circ\cdots\circ f(o,\bullet)(x_0),\quad n\ge1,
$$
obtained after filling the order $n$ times, having restocked articles at potentially new locations as determined by the optimization routine. This is depicted in Figure \ref{fig1and2}, and the experiment shows that continued application of the picking and resorting procedure leads to more pronounced clusters with better separation. To analyze this effect, we study the time evolution of the mean and variance of the clusters by plotting bivariate Gaussian probability density functions defined using each cluster's mean and covariance matrix $\Sigma_n(\ell)$ as time $n$ increases (cf.~\S\ref{ss:routes}). Figure \ref{fig:exp1} shows the clustering into three clusters of a randomized first order to be picked from an initial randomized warehouse distribution, represented by probability density functions with covariance matrices $\Sigma_1(1),\Sigma_1(2),\Sigma_1(3)$, together with snapshots of the clusters for the  $10$:th and $100$:th picking iteration of the same order, represented by probability density functions with covariance matrices $\Sigma_n(1),\Sigma_n(2),\Sigma_n(3)$ for $n=10$ and $n=100$, respectively. Each panel shows a superposition of the corresponding probability density functions. The figure shows that the clusters become more pronounced with better separation as time progresses.

\begin{figure}
\begin{center}
\begin{tikzpicture}
  \node[inner sep=0pt] (img1) {\includegraphics[width=0.3\textwidth,trim={0.25cm 0.5cm 1cm 0.5cm},clip]{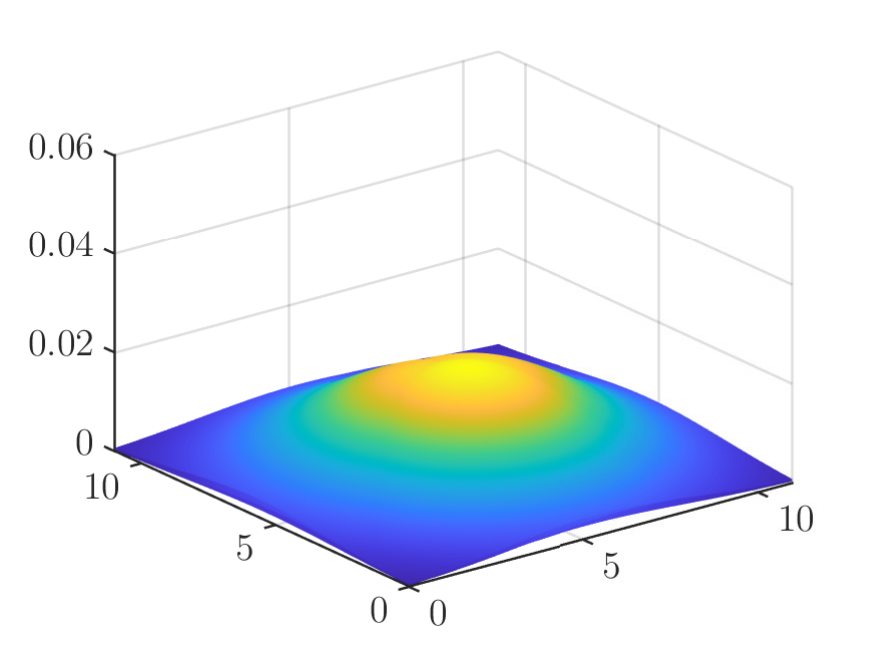}\quad
\includegraphics[width=0.3\textwidth,trim={0.25cm 0.5cm 1cm 0.5cm},clip]{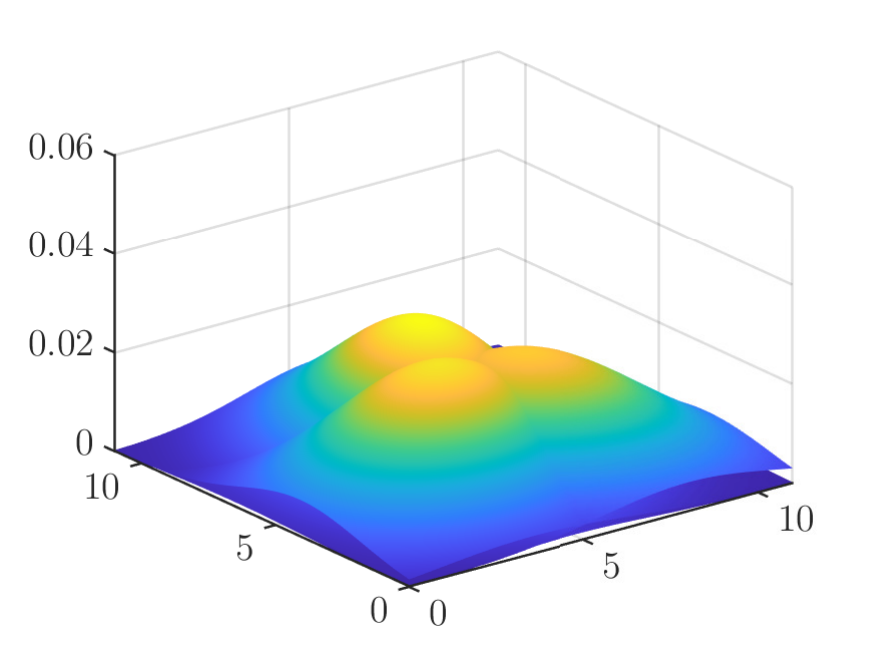}\quad
\includegraphics[width=0.3\textwidth,trim={0.25cm 0.5cm 1cm 0.5cm},clip]{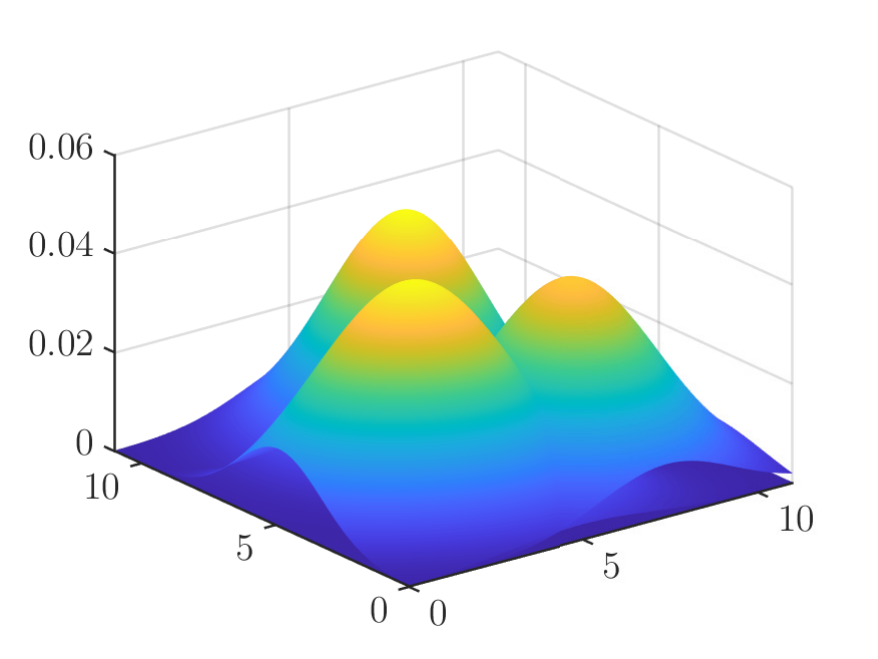}};
\end{tikzpicture}
\end{center}
\caption{Three snapshots of cluster distributions from experiment 1, illustrated by bivariate Gaussian probability density functions defined using each cluster's mean and covariance matrix $\Sigma_n(\ell)$, $\ell=1,2,3$. Each panel shows a superposition of the corresponding probability density function plots. The snapshots are taken at time $n$ (i.e., in the language of \S\ref{ss:routes} they show the clusters computed for the $n$:th order), with $n=1$ (left), $n=10$ (middle), $n=100$ (right). The orders are the same in every iteration. The figure clearly shows that as time progresses, the separation between cluster centers increases, while the bumps become more articulated due to a decreased variance.  A movie showing the evolution of the clusters for $n=1,\ldots,200$ can be found here: \colorurl{magenta}{https://play.hb.se/media/clusters_unperturbed/0_3r1qr1w4}. \label{fig:exp1}}
\end{figure}

\subsection{Experiment 2}
To account for small differences in individual orders, we next consider the random example when $\omega=(o_1,o_2,\ldots)$ and the orders $o_n$ are independent identically distributed perturbations of the order $o$ used in the previous experiment. As described in \S\ref{ss:rds} we then compute the warehouse state after picking the first $n$ orders via
$$
x_n=F_{\theta^{n-1}\omega}\circ\cdots\circ F_{\omega}(x_0)=f(o_n,\bullet)\circ\cdots\circ f(o_1,\bullet)(x_0).
$$
We let each order $o_n$ be constructed as described under item \ref{settings:4} above, but with the first of the $10$ article types in each of the $20$ purchase orders randomly selected from the $890$ article types with uniform distribution. Clearly, two such purchase orders will differ by approximately $10\%$ in article types, and two consecutive picking orders consisting of $20$ such purchase orders that are mutually disjoint to begin with then also differ by approximately $10\%$ in article types, counting multiplicity. The resulting time evolution of clusters when picking and resorting such a sequence of orders $\omega=(o_1,o_2,\ldots)$ is described in Figure \ref{fig:exp2}, and we can see that the resorting procedure still leads to clusters becoming more pronounced with better separation, even in the presence of small variations in orders. Note that since the initial clusters depend on both the initial warehouse state $x_0$ and the first order $o_1$, the initial clusters in this experiment will usually be different from the initial clusters in the previous experiment which were obtained from $x_0$ and the unperturbed order $o$.

\begin{figure}
\begin{center}
\begin{tikzpicture}
  \node[inner sep=0pt] (img1) {\includegraphics[width=0.3\textwidth,trim={0.25cm 0.5cm 1cm 0.5cm},clip]{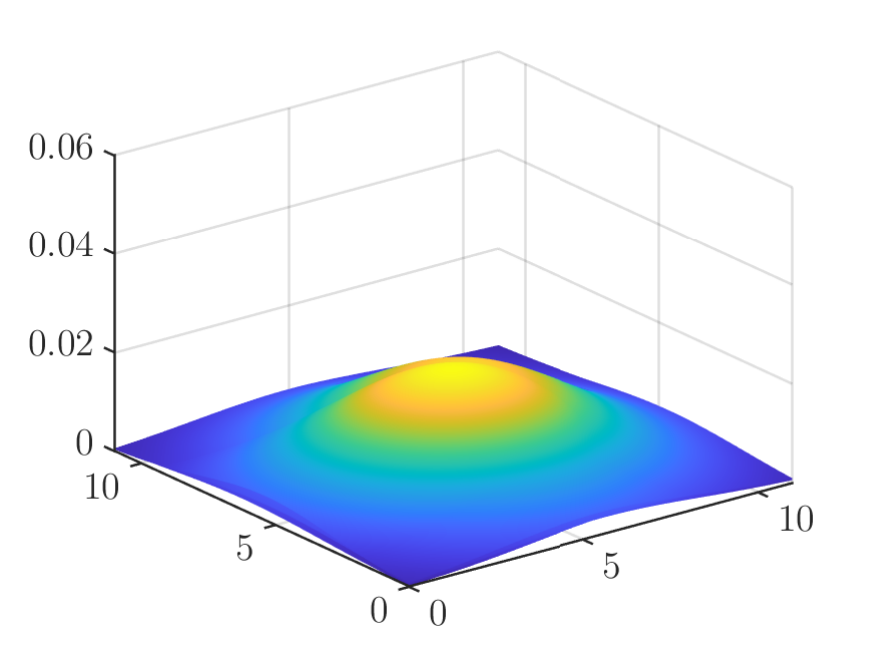}\quad
\includegraphics[width=0.3\textwidth,trim={0.25cm 0.5cm 1cm 0.5cm},clip]{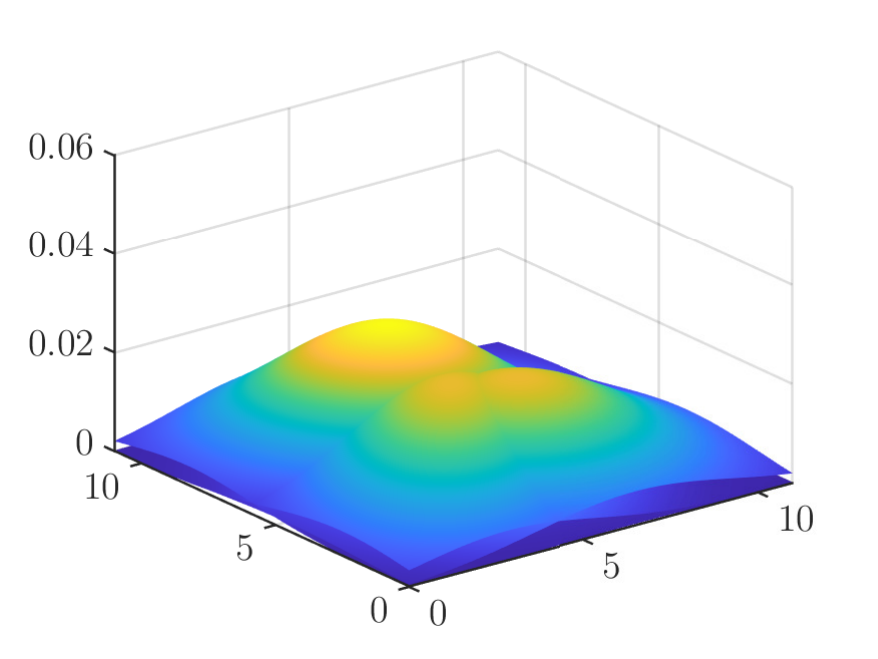}\quad
\includegraphics[width=0.3\textwidth,trim={0.25cm 0.5cm 1cm 0.5cm},clip]{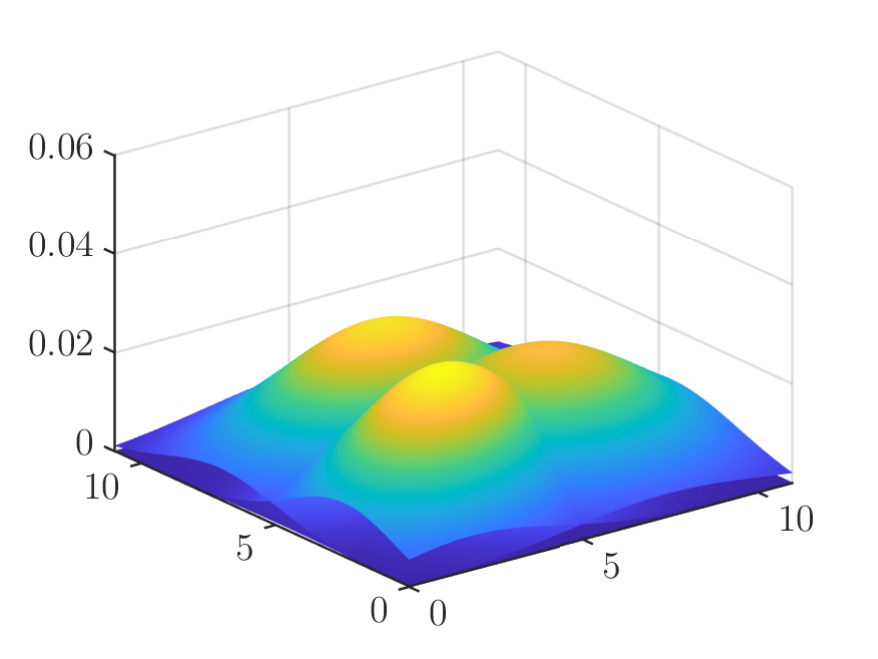}};
 
\end{tikzpicture}
\end{center}
\caption{Three snapshots of cluster distributions from experiment 2, illustrated by bivariate Gaussian probability density functions defined using each cluster's mean and covariance matrix $\Sigma_n(\ell)$, $\ell=1,2,3$. Each panel shows a superposition of the corresponding probability density function plots. The snapshots are taken at time $n=1$ (left), $n=10$ (middle), $n=100$ (right). Two picking orders now differ by approximately $10\%$ on average. Although not as pronounced, we still clearly see an increased separation and decreased variance of the clusters as time progresses. A movie showing the evolution of the clusters for $n=1,\ldots,200$ can be found here: \colorurl{magenta}{https://play.hb.se/media/clusters_static_perturbation/0_t9n5lbme} \label{fig:exp2}}
\end{figure}

\subsection{Experiment 3}
For comparison, we perform the same experiment but instead of always replacing the first of the $10$ article types in each purchase order with a random article type, we replace a randomly selected article type in each purchase order of the original order $o$ with a random article type. Two such purchase orders will then differ on average by around $19\%$ in article types, and two consecutive picking orders consisting of $20$ such purchase orders that are mutually disjoint to begin with also differ by approximately $19\%$ in article types on average, counting multiplicity. The result is described in Figure \ref{fig:exp3} and we can see that although still there, the clusters are now showing less of a tendency of becoming more pronounced with better separation.

\begin{figure}
\begin{center}
\begin{tikzpicture}
  \node[inner sep=0pt] (img1) {\includegraphics[width=0.3\textwidth,trim={0.25cm 0.5cm 1cm 0.5cm},clip]{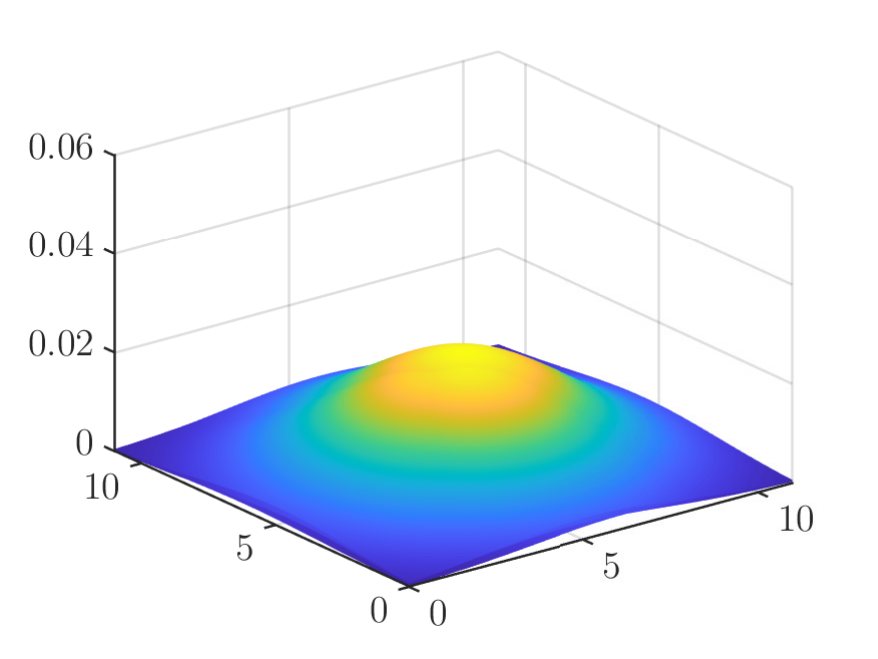}\quad \includegraphics[width=0.3\textwidth,trim={0.25cm 0.5cm 1cm 0.5cm},clip]{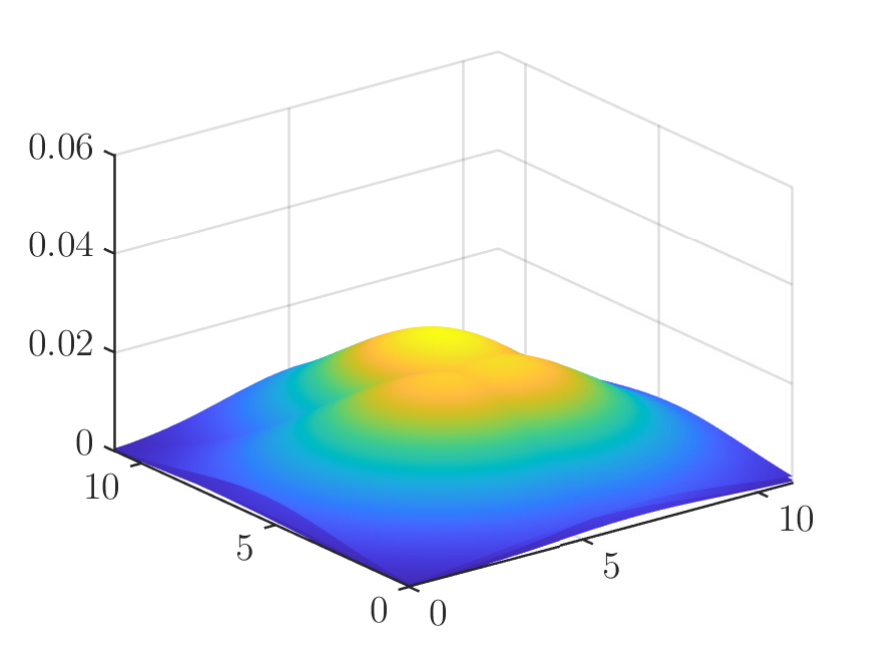}\quad \includegraphics[width=0.3\textwidth,trim={0.25cm 0.5cm 1cm 0.5cm},clip]{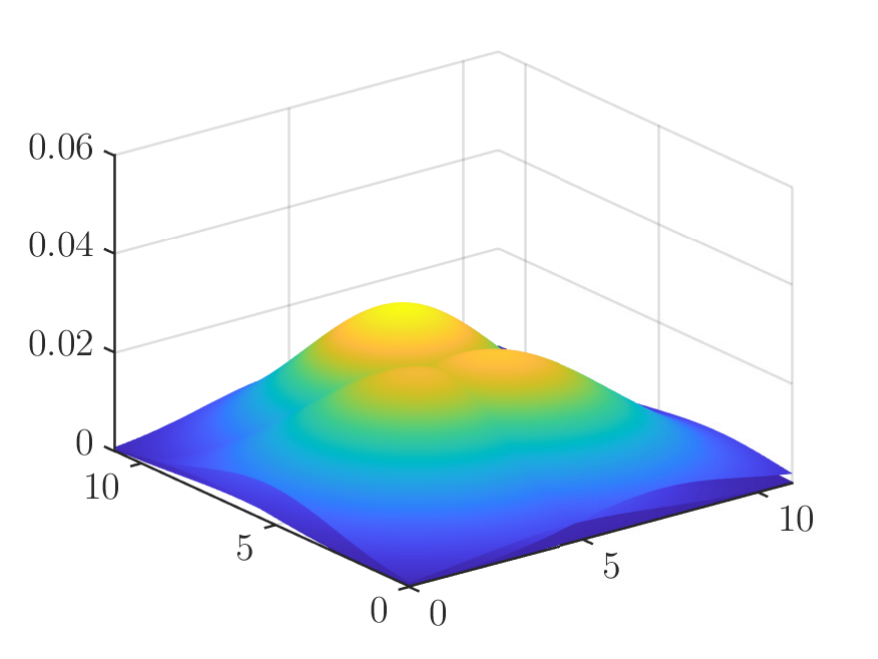}};
\end{tikzpicture}
\end{center}
\caption{Three snapshots of cluster distributions from experiment 3, illustrated by bivariate Gaussian probability density functions defined using each cluster's mean and covariance matrix $\Sigma_n(\ell)$, $\ell=1,2,3$. Each panel shows a superposition of the corresponding probability density function plots. The snapshots are taken at time $n=1$ (left), $n=10$ (middle), $n=100$ (right). Two picking orders now differ by approximately $19\%$ on average, and we only see a slight improvement in the separation and variance of the clusters as time progresses. A movie showing the evolution of the clusters for $n=1,\ldots,200$ can be found here: \colorurl{magenta}{https://play.hb.se/media/clusters_random_perturbation/0_g0zx8ooj} \label{fig:exp3}}
\end{figure}

\subsection{Cluster separation}
As experiments 1, 2, and 3 show, the picking and resorting procedure leads in time to increased separation of the initial clusters, although this separation becomes less pronounced as the variability of the orders increases. To quantify this effect we measure the area of the triangle defined by the cluster center points, and see how this varies with time for the three different setups used in experiments 1, 2, and 3. For experiment 1 the locations of the cluster centers vary fairly slowly, while for experiment 2 and especially 3 the locations vary more rapidly, and as a result the area fluctuates more for experiment 2 and 3. To increase signal strength we therefore average the calculations over 10 runs for each experiment type 1, 2, and 3, and compute the area as a function of time (number of iterations of the picking and resorting algorithm). 

The result is presented in Figure \ref{fig:area}. For all experiment types, the average area increases from an initial average area of around $0.1$; for experiment 1 it has increased to around 6 after the first 20 iterations; for experiment 2 it has increased to around 3 after the first 20 iterations; for experiment 3 it has increased to around 1 after the first 20 iterations. In all cases, the increase is immediate and exceeds one order of magnitude, providing quantitative evidence that the resorting procedure effectively improves cluster separation.

\begin{figure}
	\centering
\begin{tikzpicture}
\begin{axis}[
    y={0.019\linewidth},
    x={0.11\linewidth},
    xtick={0,1,2},
    ytick={0,2,4,6,8,10},
    xticklabels={0,100,200},
xmin=0,
xmax=2,
ymin=0,
ymax=11,
xlabel={Iteration},
ylabel={Area},
ylabel near ticks, 
axis on top=true,
]
\addplot graphics[xmin=0,ymin=0,xmax=2.01,ymax=11]{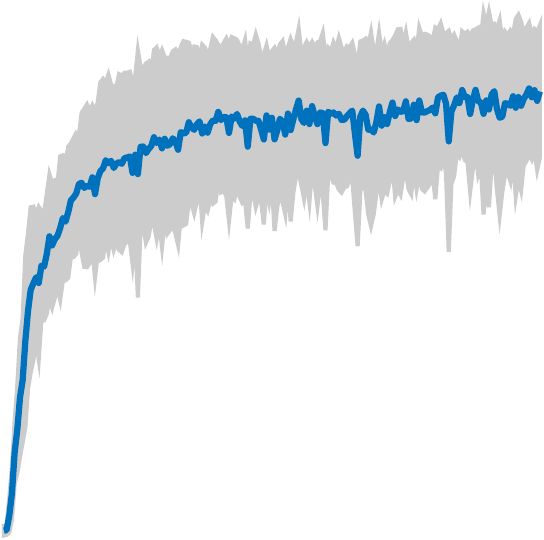};
\end{axis}
\end{tikzpicture}
\quad
\begin{tikzpicture}
\begin{axis}[
    y={0.019\linewidth},
    x={0.11\linewidth},
    xtick={0,1,2},
    ytick={0,2,4,6,8,10},
    xticklabels={0,100,200},
xmin=0,
xmax=2,
ymin=0,
ymax=11,
xlabel={Iteration},
axis on top=true,
]
\addplot graphics[xmin=0,ymin=0,xmax=2.01,ymax=11]{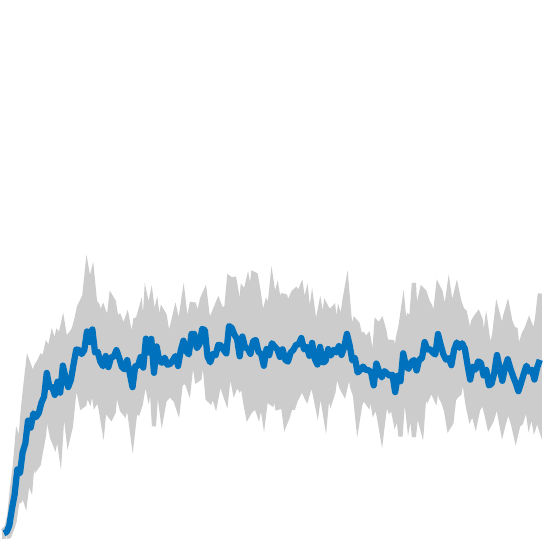};
\end{axis}
\end{tikzpicture}
\quad
\begin{tikzpicture}
\begin{axis}[
    y={0.019\linewidth},
    x={0.11\linewidth},
    xtick={0,1,2},
    ytick={0,2,4,6,8,10},
    xticklabels={0,100,200},
xmin=0,
xmax=2,
ymin=0,
ymax=11,
xlabel={Iteration},
axis on top=true,
]
\addplot graphics[xmin=0,ymin=0,xmax=2.01,ymax=11]{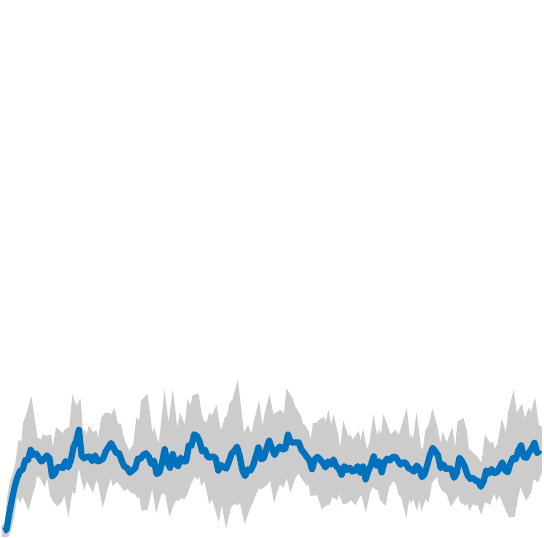};
\end{axis}
\end{tikzpicture}
\caption{Area of the triangle defined by the cluster center points as a function of the number of picking iterations (time) for experiment 1 (left), experiment 2 (middle), and experiment 3 (right). To increase signal strength, the area is averaged over 10 experiments of each type, shown in blue. The gray region indicates the average area plus minus one standard deviation. 
The average area for the initial warehouse state is around $0.1$, and in all three panels this increases substantially within the first 10 or 20 iterations as the clusters become more separated due to the picking and resorting procedure. However, the increase is visibly smaller for experiment 2 and even more so for experiment 3, indicating less pronounced separation of clusters as the noise level increases. 
 \label{fig:area}}
\end{figure}

\subsection{Quantitative analysis of cluster quality}

Here we quantify and compare the increase in cluster concentration and separation for the three experiments using silhouette scores to evaluate cluster quality~\cite{rousseeuw1987silhouettes}. We first recall the definition of silhouette score, so let $i$ denote a data point assigned to a cluster $A$. Define
\[
a(i) = \frac{1}{|A|-1} \sum_{\substack{j \in A \\ j \neq i}} d(i,j),
\]
as the average dissimilarity between $i$ and all other points in its own cluster $A$. Here $d(i,j)$ is the distance between the data points $i$ and $j$.
For every other cluster $C \neq A$, compute the average dissimilarity of $i$ to all points in $C$, and let
\[
b(i) = \min_{C \neq A} \frac{1}{|C|} \sum_{j \in C} d(i,j).
\]
The silhouette value for point $i$ is then defined as
\[
s(i) = \frac{b(i) - a(i)}{\max\{a(i),\, b(i)\}}, \qquad -1 \leq s(i) \leq 1.
\]
The overall silhouette score of a clustering solution with $n$ points is obtained by averaging over all points:
\[
S = \frac{1}{n} \sum_{i=1}^{n} s(i),
\]
with values ranging from $-1$ to $+1$. Higher scores indicate more compact and well-separated clusters, while values near~$0$ indicate overlapping clusters and negative values suggest misclassification. As a rule of thumb, scores above about~$0.5$ reflect strong structure, scores around~$0.25$--$0.5$ indicate moderate clustering, and values close to~$0$ or below denote weak or absent cluster structure.

As in the previous subsection we perform 10 runs for each experiment type 1, 2, and 3 (abbreviated Exp 1, Exp 2, Exp 3 below), and calculate the silhouette score at each iteration. Figure~\ref{fig:silhouette-trajectories} shows the evolution of silhouette scores over 100 iterations for the three experiment types. Exp 1 increases rapidly and stabilizes around 0.45, Exp 2 reaches about 0.10, and Exp 3 increases to around zero. The confidence intervals separate clearly between the experiments.

\begin{figure}
\begin{center}
\begin{tikzpicture}
\node[inner sep=0pt] (img) {\includegraphics[width=0.61\linewidth]{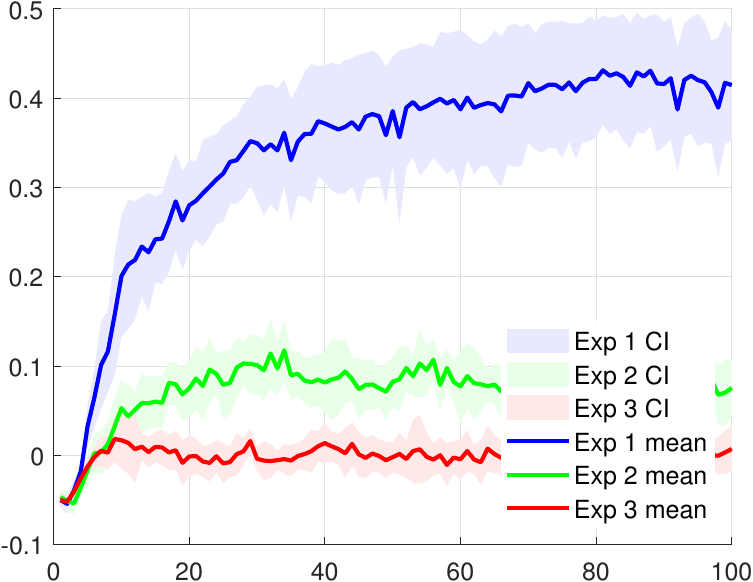}};
\node[rotate=90] at ([xshift=-4mm,yshift=1mm]img.west) {Silhouette score};
\node at ([xshift=2.2mm,yshift=-3mm]img.south) {Iteration};
\node at ([xshift=1.5mm,yshift=2mm]img.north) {\bf Silhouette trajectories with 95\% CI};

\path ([xshift=0mm]img.west) -- ([xshift=6.5mm]img.east) coordinate[pos=0](bbwest) coordinate[pos=1](bbeast);
\useasboundingbox (bbwest |- img.south) rectangle (bbeast |- img.north);
\end{tikzpicture}
\end{center}
\caption{Silhouette score trajectories over 100 iterations for a warehouse with $10 \times 10 \times 10$ nodes. Solid lines show the mean silhouette score across runs, with shaded bands indicating 95\% confidence intervals. Experiment 1 shows strong and stable improvement, experiment 2 reaches modest gains, and experiment 3 remains close to zero, indicating limited clustering performance under high noise. \label{fig:silhouette-trajectories}}
\end{figure}

To quantify these improvements, we compare final to initial silhouette scores within each run. Paired $t$-based confidence intervals indicate substantial improvement in Exp 1 ($\Delta = 0.47$, 95\% CI [0.41, 0.53]), smaller but positive improvement in Exp 2 ($\Delta = 0.12$, 95\% CI [0.09, 0.16]), and only marginal improvement in Exp 3 ($\Delta = 0.06$, 95\% CI [0.03, 0.08]). These results suggest that, for the parameter settings described in the beginning of Section \ref{sec:numericalexperiments}, the method remains effective under low and, to some extent, moderate noise, but yields limited gains under high noise. This is in accordance with the visual evidence of Figures \ref{fig:exp1}--\ref{fig:area}. We note that under these settings the pick list already covers a substantial fraction of warehouse article types, and this fraction grows further with added noise. (As we will see, when the ratio of pick-list to warehouse article types is smaller, the method performs well also under moderate to high noise, see Section \ref{sec:large_experiments}.)

To assess whether the magnitude of improvement differs between experiments, we perform pairwise permutation tests on the run-level improvements (10\,000 resamples). All comparisons are statistically significant after Bonferroni correction (Exp 1 vs.\ Exp 2: $p < 0.0001$; Exp 1 vs.\ Exp 3: $p < 0.0001$; Exp 2 vs.\ Exp 3: $p = 0.0063$). Effect sizes are also large: Exp 1 dominates both Exp 2 and Exp 3 (Cohen's $d \ge 5$, Cliff's $\delta = 1.00$ in both cases), while Exp 2 also outperforms Exp 3 with a large effect (Cohen's $d = 1.5$, Cliff's $\delta = 0.76$). For reference, Cohen's $d$ values of 0.2, 0.5, and 0.8 are often interpreted as small, medium, and large effects, respectively~\cite{cohen2013statistical}; for Cliff's $\delta$, thresholds of 0.147, 0.33, and 0.474 have been suggested for small, medium, and large effects~\cite{cliff1993dominance,romano2006appropriate}. With only three pairwise comparisons, the Bonferroni correction had minimal impact on the results and did not alter the overall conclusions. Thus, although performance degrades with noise, clustering improvements remain detectable even under moderate noise. The final values and statistical comparisons are summarized in Table \ref{table1}.

\begin{table}
\centering
\caption{Silhouette score improvements ($\Delta$) with 95\% confidence intervals, and pairwise statistical comparisons between experiments for a warehouse with $1000$ nodes. Pairwise $p$-values are from permutation tests (Bonferroni corrected). Effect sizes are reported as Cohen's $d$ and Cliff's $\delta$. \label{table1}}
\begin{tabular}{lccc}
\toprule
 & Exp 1 & Exp 2 & Exp 3 \\
\midrule
Mean improvement $\Delta$ & 0.47 & 0.12 & 0.06 \\
95\% CI & [0.41, 0.53] & [0.09, 0.16] & [0.03, 0.08] \\
\midrule
\multicolumn{4}{l}{\textbf{Pairwise comparisons}} \\
\midrule
Exp 1 vs Exp 2 & \multicolumn{3}{l}{$p < 0.0001$, $d = 5.0$, $\delta = 1.0$} \\
Exp 1 vs Exp 3 & \multicolumn{3}{l}{$p < 0.0001$, $d = 6.3$, $\delta = 1.0$} \\
Exp 2 vs Exp 3 & \multicolumn{3}{l}{$p = 0.0063$, $d= 1.5$, $\delta = 0.76$} \\
\bottomrule
\end{tabular}
\end{table}

\subsection{Route optimization}

Here we analyze the effects that the resorting algorithm has on route optimization, and showcase the approximate method of calculating optimal routes that we describe in Section \ref{sec:routeoptimization}, performed over 10 independent runs. Each run starts from an arbitrary customer order, consisting of 10 products with varying quantities. The order is assumed to be recurring (as in Experiment 1 above), and at the initial stage, iteration 1, the articles are randomly distributed throughout the entire domain of the warehouse node positions. Since the customer order is clustered, it will trigger a reorganization of clustered products, causing them to relocate closer to the cluster center.  As seen above, the clusters tend to become more concentrated with reorganization, suggesting that the picking area within each cluster decreases. At iteration 300 we should therefore see a decrease in the total picking distance for each customer order.

We first present one representative trial, choosing the case with the largest deviation from optimal among the 10 independent runs. In this instance, the obtained route length exceeded the optimal by 25\%, which constitutes the worst performance observed under the tested settings. The order for this case is presented in Table \ref{tab:product_quantities}.

\begin{table}
\centering
\caption{Products and their quantities. The position of each article in the warehouse is encoded through a value between $0$ and $99$ to describe its $(x,y)$ coordinate in terms of a flattened $10\times10$ grid starting from bottom left, together with a value between $0$ and $9$ to describe its $z$ coordinate.     \label{tab:product_quantities}}
\begin{tabular}{lc}
\toprule
\textbf{Product} & \textbf{Quantity} \\
\midrule
a17\_4 & 2 \\
a36\_6 & 2 \\
a57\_8 & 6 \\
a9\_7  & 6 \\
a90\_0 & 6 \\
a77\_8 & 7 \\
a93\_6 & 5 \\
a38\_1 & 4 \\
a23\_2 & 5 \\
a24\_4 & 4 \\
\bottomrule
\end{tabular}
\end{table}

When evaluating all possible route permutations through a graph with $n$ vertices, the total number of unique paths is given by $n!$, assuming each vertex is visited exactly once. For each such permutation, $(n - 1)$ transitions must be evaluated to compute the total route cost, resulting in $n!(n - 1)$ operations in total. In the CUDA C implementation (see Subsection \ref{ss:GPUimplementation} for details), each route permutation may be assigned to a separate thread, and within each thread, an internal loop is used to process the transitions between vertices. If $V$ denotes the number of vertices (i.e., the number of stops per route), and $N = n!$ is the total number of permutations, then the computational structure involves operations proportional to $V \times N$. This scaling becomes problematic as $n$ increases, since $n!(n - 1)$ grows factorially and dominates memory consumption. Empirically, we observed that values of $N$ larger than 2,903,040 lead to memory errors in the CUDA kernel, likely due to this exponential increase. As a result, processing large values of $n$ without segmenting the computation or distributing it across multiple kernel launches becomes infeasible due to the limitations in available GPU memory, see Figure \ref{fig1and2b}.

\begin{figure} 
\begin{center}
\includegraphics[width=0.48\textwidth]{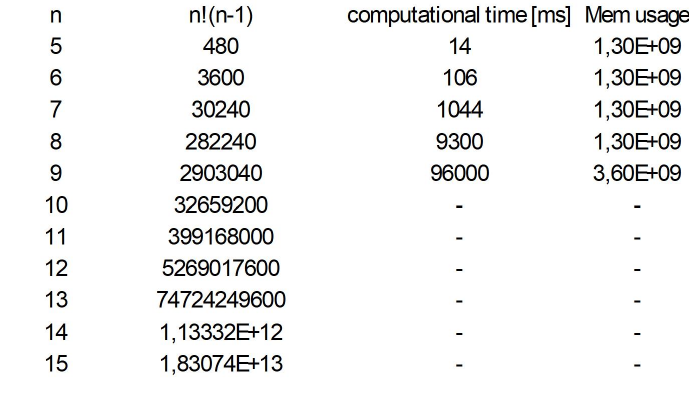}\quad
\includegraphics[width=0.48\textwidth]{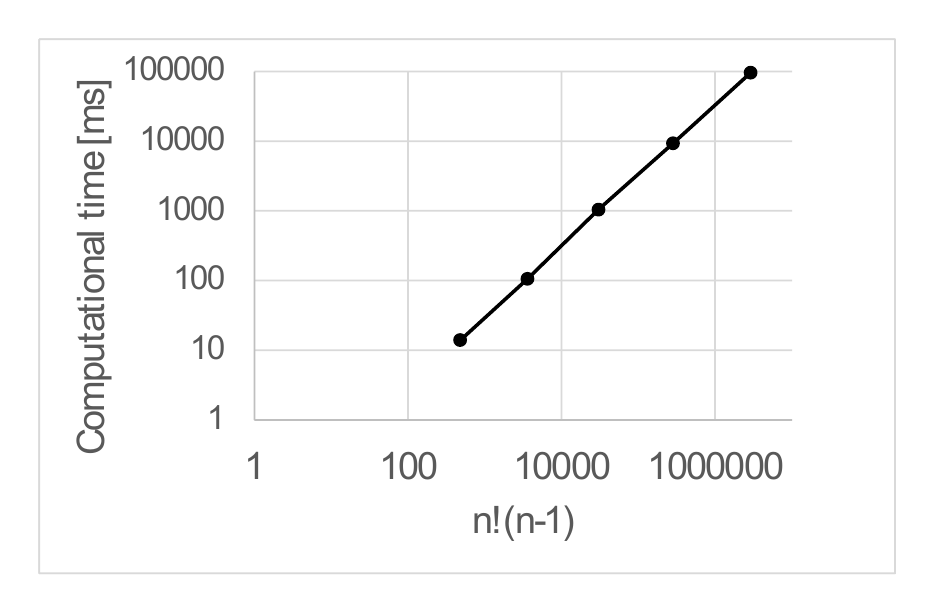}
\caption{Data from the Bellman Ford GPU version (RTX6000, 24GB VRAM) (left), and a comparison between $n!(n-1)$ and computational time (right).\label{fig1and2b}}
\end{center}
\end{figure}

Hence, since the number of stop positions in our experiment is \(n=10\), we use the approximate method for calculating an optimal route described in Section \ref{sec:routeoptimization}. Initially, the number of stops is 10 while the number of passed nodes is 27, and the order is picked over a large portion of the warehouse area. By iteration 300, the clustered order regions have shrunk, and the number of passed nodes is now 15, determined using the approximate method. This corresponds to a 44\% reduction in route length at iteration 300. At this stage, the reorganization has resulted in many articles being located at the same stopping node, reducing the number of combinations to \(6!\). This reduction enables a full combinatorial calculation of the optimal route. The result of the simulation is shown in Figure \ref{route_comp}. The approximate method does not produce the exact same result as the optimal method, with an optimal distance of 12. However, given the challenges with large \(n!\), the approximate method allows for results close to the optimal solution in real-time when using GPUs.

\begin{figure}
\begin{center}
\includegraphics[width=1.0\textwidth]{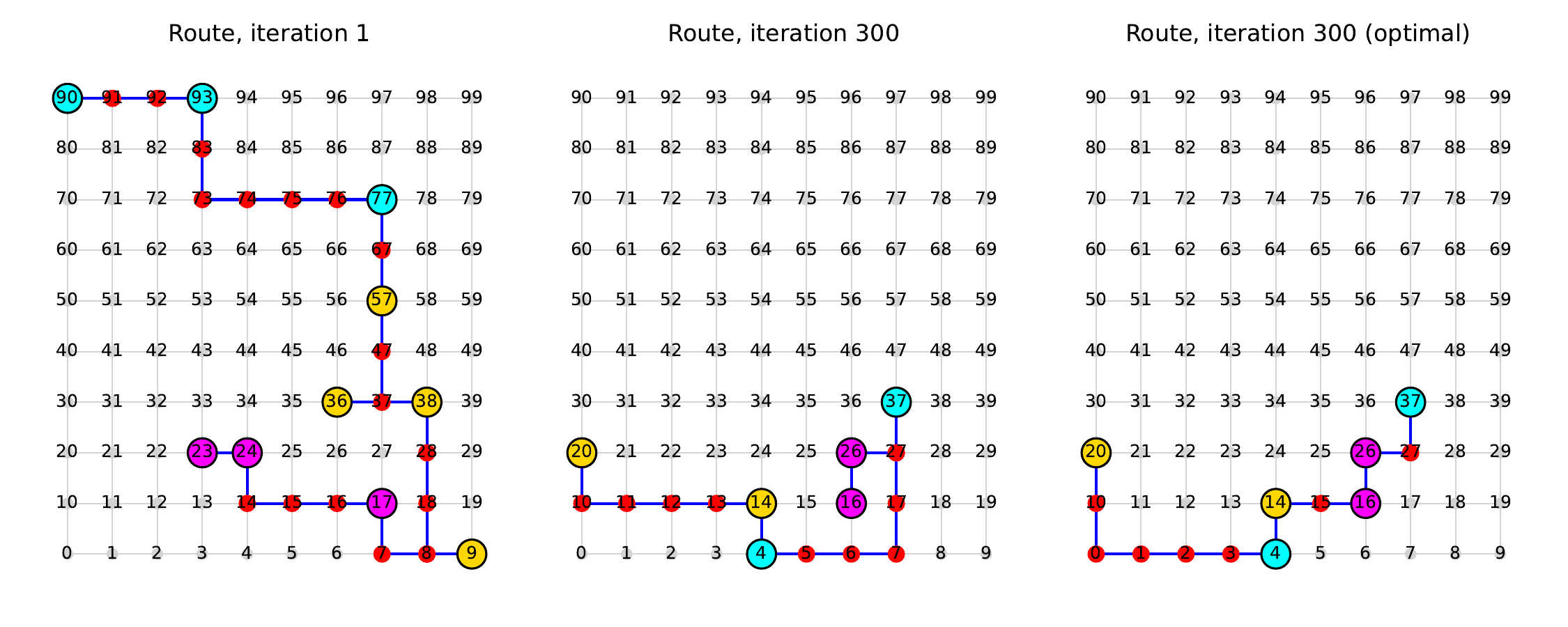}
\caption{Illustrative example of the clustering-based route optimization, showing picking routes for the customer order from Table \ref{tab:product_quantities} at different iterations of the resorting algorithm. This is the trial with the largest deviation from optimal among the 10 runs considered, where the obtained route length exceeds the optimal by 25\% at final iteration. The stopping positions along the route are indicated by the larger nodes, and they are clustered into 3 color coded regions using the methodology from Section \ref{sec:routeoptimization}. At iteration 1 there are 10 stopping nodes, and the left panel shows the computed route. At iteration 300, the resorting algorithm has reduced the number of stopping positions to $6$, and the middle panel  shows the computed approximately optimal route with length 15. For comparison, the right panel shows the optimal route after 300 iterations, with length 12. \label{route_comp}}
\end{center}
\end{figure}

While this example illustrates the worst-performing case, we next summarize results across all runs described in Table \ref{tab:route_results}. The ratio of obtained to optimal route length had a mean of 1.08 (SD = 0.08, median = 1.09), ranging from 1.00 (exact optimal) to 1.25 (worst case). The 95\% confidence interval for the mean ratio was [1.02, 1.14], indicating that the method typically produced solutions within 2--14\% of the optimal route length despite requiring much less computation. No significant correlation was found between optimal route length and approximation ratio ($r = 0.01$, $p = 0.99$), indicating that performance was stable across different route lengths.

\begin{table}
\centering
\caption{Comparison between optimal route length and clustering-based route length across 10 selected cases. Diff = difference between obtained and optimal length. Ratio = obtained/optimal.}
\begin{tabular}{lrrrr}
\hline
Case & Optimal & Obtained & Diff & Ratio \\
\hline
1 & 13 & 15 & 2 & 1.15 \\
2 & 11 & 11 & 0 & 1.00 \\
3 &  8 &  9 & 1 & 1.12 \\
4 &  7 &  7 & 0 & 1.00 \\
5 & 11 & 12 & 1 & 1.09 \\
6 & 12 & 15 & 3 & 1.25 \\
7 & 14 & 14 & 0 & 1.00 \\
8 & 14 & 14 & 0 & 1.00 \\
9 & 10 & 11 & 1 & 1.10 \\
10 & 12 & 13 & 1 & 1.08 \\
\hline
Mean & -- & -- & -- & 1.08 \\
SD   & -- & -- & -- & 0.08 \\
95\% CI & -- & -- & -- & [1.02, 1.14] \\
\hline
\end{tabular}
\label{tab:route_results}
\end{table}

\subsection{GPU Implementation}\label{ss:GPUimplementation}
The Bellman--Ford algorithm was parallelized and implemented in CUDA~C, targeting an NVIDIA RTX~6000 GPU (Turing architecture, 24~GB GDDR6). Each edge relaxation was assigned to one GPU thread, enabling full parallel coverage of all $E$ edges per iteration. The kernel was configured with 256 threads per block, resulting in $\lceil E / 256 \rceil$ blocks per launch. For a representative instance with $V=9716$ vertices and $E=26\,999$ directed edges, this configuration corresponds to 106 thread blocks per iteration. 
The graph data were stored in edge-list format using integer arrays \texttt{u}, \texttt{v}, and \texttt{w} for source vertices, destination vertices, and edge weights, respectively. The distance and predecessor arrays (\texttt{dist}, \texttt{pred}) were allocated in global memory as contiguous vectors of size $V \times N$, where $N$ denotes the number of source vertices processed simultaneously. This layout allows coalesced memory access and efficient indexing across threads. The total device memory footprint for all arrays was approximately 1~MB, which is negligible relative to the available VRAM, confirming that capacity was not a limiting factor. 

Although the memory demand is small, performance is constrained by global memory bandwidth. Each thread performs repeated read–write operations to the shared distance arrays across all $N$ sources in every relaxation step. As $N$ increases, these concurrent accesses lead to high write contention and non-coalesced access patterns, saturating the GPU’s memory bus rather than its computational resources. This bottleneck is intrinsic to the Bellman--Ford algorithm when expressed in its fully parallel form: the very feature that enables massive edge-level concurrency also enforces global synchronization after every iteration. Any attempt to restructure the CUDA kernel to alleviate this effect would inherently reduce the degree of parallelism, negating the algorithm’s primary performance advantage. 
Execution times were measured using CUDA event timers. For the configuration above, the complete GPU execution---including $(V-1)$ relaxation rounds and host synchronization---completed in approximately \texttt{100}~ms, corresponding to a throughput of \texttt{2627}~million traversed edges per second (MTEPS). Profiling confirmed that the kernel is memory-bound, with arithmetic utilization remaining below 20\% of theoretical peak. 
All implementation details and datasets used for validation are available for reproducibility at \texttt{https://github.com/mnbe1973/Bellman-Ford-Cuda-C}.

\begin{rmk}
For validation, we verified that the CPU implementations of Dijkstra’s and Bellman--Ford algorithms produced identical shortest paths on representative test graphs before porting Bellman--Ford to CUDA~C. This confirmed functional equivalence between the two approaches. Dijkstra’s algorithm, however, is not straightforward to express in a fully parallel GPU form due to its priority-queue dependence, whereas Bellman--Ford can relax all edges concurrently. Hence, the latter was chosen for GPU acceleration. Metaheuristic approaches such as ant colony optimization (ACO) were deemed outside the scope of this study. Preliminary CPU--GPU comparisons indicated numerical consistency, while performance gains align with previously reported GPU speedups for edge-relaxation-based algorithms.
\end{rmk}

\section{Large-scale experiments}\label{sec:large_experiments}

To assess the scalability of the proposed method, we perform an additional set of experiments on a warehouse configuration of size $100 \times 100 \times 10$, corresponding to $100\,000$ nodes. The parameter settings otherwise follow those of Section \ref{sec:numericalexperiments}: each picking order consists of a group of 20 customer orders, each with 10 article types and 1--10 parcels of every such article type. This design preserves comparability with Experiments 1--3 above, while allowing us to examine performance in a substantially larger fulfillment center. In this section we thus have the following parameter settings:

\begin{enumerate}
\item \label{settings:11} The warehouse contains $100\times 100\times 10$ picking nodes located at $(i,j,k)\in [1,100]\times [1,100]\times[1,10]$. This means that $n_x=n_y=100$ and $n_z=10$ in \eqref{eq:dim}. 
\item \label{settings:12} The number of article types is $N=89\, 000$, and we assume that the maximum number of articles at a node is $10$ for all article types, so $M_{a_{ijk}}=10$ for all $(i,j,k)$. Thus, the state space $X$ is 
$$
X=\{(a_{ijk},m_{ijk})_{ijk}: a_{ijk}\in[0,89\, 000],\ m_{ijk}\in[0,10]\}.
$$
\item \label{settings:13} The initial warehouse state $x_0$ is randomized with the following constraints: It contains $1100$ $xy$-coordinate pairs $(i_l,j_l)$, $0\le l\le 1099$, such that the nodes $(i_0,j_0,k),\ldots,(i_{1099},j_{1099},k)$ are empty for all $k\in [1,10]$. This corresponds to $1100$ empty racks, for a total of $11\,000$ empty nodes. A unique article type $a_{ijk}\in[1,89\,000]$ is assigned to each of the remaining $89\,000$ nodes, and we assume that the initial warehouse state is fully stocked, so the balance at each nonempty node $10$. 
\item \label{settings:14} An order $o$ to be picked is a collection of $20$ individual customer or purchase orders. A purchase order consists of $10$ article types with between $1$ and $10$ parcels of each type. The $20$ purchase orders are mutually disjoint, so $o$ consists of $200$ article types. In each picking iteration, the $20$ purchase orders are divided into three groups using $K$-means clustering based on where the articles are stored in the warehouse at that time (with $K=3$ reflecting the classical ABC demand classification scheme, as discussed in Section~\ref{ss:relatedworks}). Each such group is a cluster. 
\end{enumerate}

An important consequence of this scaling is that the ratio of picked article types to the total number of available article types is now much smaller than in the previous $10 \times 10 \times 10$ setup. This provides a test of how clustering performs when orders occupy only a small fraction of the storage capacity. Under these conditions, we observed that the method performed well across all noise levels, including Experiment~3, which previously showed only marginal improvement in the smaller warehouse setting.

Due to the larger problem size, we restrict attention in this section to quantitative evaluation using silhouette scores and corresponding statistical validation. In contrast to Section~4, we do not include visual illustrations of cluster compactness or separation, but instead report results directly through statistical measures of cluster quality. Justification for the choice of $100 \times 100 \times 10$ nodes is provided below.

Figure~\ref{fig:silhouette_large} and Table~\ref{table2} summarize the results for the large warehouse configuration. Substantial improvements in silhouette score were observed in all experiments, with the largest gains in Exp~1 ($\Delta = 0.86$, 95\% CI [0.81, 0.90]), followed by Exp~2 ($\Delta = 0.50$, 95\% CI [0.45, 0.54]) and Exp~3 ($\Delta = 0.18$, 95\% CI [0.15, 0.21]). Pairwise permutation tests confirmed that all differences between experiments were highly significant ($p < 0.0001$ after Bonferroni correction). Effect sizes were uniformly large (Cohen’s $d > 5.5$, Cliff’s $\delta = 1.0$), indicating clear separation between noise levels. These results show that the clustering method scales effectively to a warehouse with $100\,000$ nodes and maintains stable performance even under the highest tested noise setting.

\begin{figure} 
\begin{center}
\begin{tikzpicture}
\node[inner sep=0pt, xshift=-6mm] (img) {\includegraphics[width=0.61\linewidth]{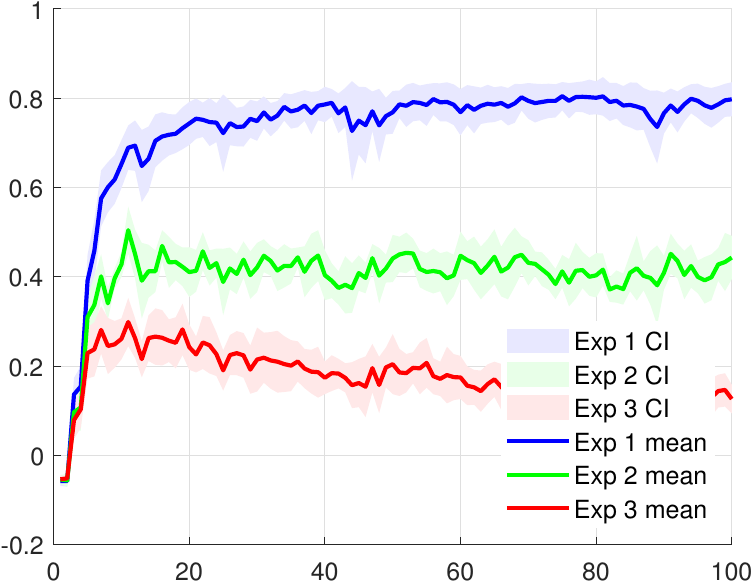}};
\node[rotate=90] at ([xshift=-4mm,yshift=1mm]img.west) {Silhouette score};
\node at ([xshift=2.2mm,yshift=-3mm]img.south) {Iteration};
\node at ([xshift=1.5mm,yshift=2mm]img.north) {\bf Silhouette trajectories with 95\% CI};

\path ([xshift=0mm]img.west) -- ([xshift=6.5mm]img.east) coordinate[pos=0](bbwest) coordinate[pos=1](bbeast);
\useasboundingbox (bbwest |- img.south) rectangle (bbeast |- img.north);
\end{tikzpicture}
\end{center}
\caption{Silhouette score trajectories over 100 iterations for the large warehouse configuration ($100 \times 100 \times 10$ nodes). Solid lines show the mean silhouette score across runs, with shaded bands indicating 95\% confidence intervals. All experiments show substantial improvements, with higher noise (Exp~3) leading to smaller gains compared to Exp~1 and Exp~2. \label{fig:silhouette_large}}
\end{figure}

\begin{table} 
\centering
\caption{Silhouette score improvements ($\Delta$) with 95\% confidence intervals, and pairwise statistical comparisons between experiments for a large warehouse with $100\, 000$ nodes. Pairwise $p$-values are from permutation tests (Bonferroni corrected). Effect sizes are reported as Cohen's $d$ and Cliff's $\delta$. \label{table2}}
\begin{tabular}{lccc}
\toprule
 & Exp 1 & Exp 2 & Exp 3 \\
\midrule
Mean improvement $\Delta$ & 0.86 & 0.50 & 0.18 \\
95\% CI & [0.81, 0.90] & [0.45, 0.54] & [0.15, 0.21] \\
\midrule
\multicolumn{4}{l}{\textbf{Pairwise comparisons}} \\
\midrule
Exp 1 vs Exp 2 & \multicolumn{3}{l}{$p < 0.0001$, $d = 5.6$, $\delta = 1.0$} \\
Exp 1 vs Exp 3 & \multicolumn{3}{l}{$p < 0.0001$, $d = 12.2$, $\delta = 1.0$} \\
Exp 2 vs Exp 3 & \multicolumn{3}{l}{$p < 0.0001$, $d = 5.6$, $\delta = 1.0$} \\
\bottomrule
\end{tabular}
\end{table}

In academic benchmarking, Duque-Jaramillo et al.~\cite{duque2024warehouse} document a distribution warehouse with two sections, 24 rows per section, 66 columns per row and seven levels per column, giving a total of 22\,176 storage slots and overall dimensions of $72 \,\text{m} \times 190 \,\text{m}$. This is approximately 22\% of the $100\,000$ nodes scale in our large instance and about 22 times the 1000 node scale in our small instance. Alqahtani~\cite{alqahtani2023improving} analyses a retail distribution centre handling 40 SKUs (32 in detailed analysis) within a mixed block-storage and selective-rack layout, equivalent to roughly 0.04\% of our large instance and 4\% of our small instance.

In industry benchmarking, Digi-Key’s Product Distribution Center expansion (PDCe) in Thief River Falls, Minnesota, is a 2.2 million ft$^2$ ($\approx 204\,387 \,\text{m}^2$) facility—one of the ten largest warehouses in North America—designed to enable nearly triple its previous daily average of 27\,000 packages, with dual conveyor systems spanning more than 27 miles~\cite{digikey2022,digikey2023}. While facility-level SKU counts are not public, its throughput capacity of approximately 81\,000 packages/day allows for a conservative order-of-magnitude estimate. Assuming an average of 5--7 SKUs per package, typical for electronics distributors, this corresponds to roughly $486\,000$ SKU-picks per day. Although SKU-picks/day is not identical to unique SKUs stored, this approximate throughput represents about five times the SKU-level scale of our large-instance simulation.

These comparisons place our large-instance simulation within the lower order of magnitude of SKU or picking scales observed in major operational warehouses, while also highlighting heterogeneity in SKU density, throughput, and layout. Detailed SKU-level and throughput data for the largest operators, including Amazon, are not publicly disclosed at the facility level due to commercial confidentiality, a limitation also noted by Zhuang et al.~\cite{zhuang2024improving}. Taken together, these benchmarks indicate that the scale of our simulated instance is representative of modern fulfillment operations while remaining computationally tractable for GPU-based optimization.

\section{Discussion and conclusions}\label{sec:discussion}

The results presented above demonstrate that the proposed clustering--RDS framework can reorganize warehouse layouts dynamically, yielding substantial improvements in route efficiency and cluster compactness. While the experiments rely on synthetic but representative warehouse configurations, the observed effects are systematic and robust across repetitions. Although the simulation setting abstracts from several operational complexities, it captures the essential structural dynamics of storage-location assignment and product reorganization. 

The following discussion outlines the robustness and limitations of the proposed framework, evaluates its practical relevance, and identifies directions for future research and industrial deployment.

The implementation of the Bellman--Ford algorithm in CUDA~C demonstrates that GPU-based parallelism is highly effective for solving routing problems, particularly when evaluating large numbers of path combinations. While the exponential growth of permutations presents a challenge, especially as the number of nodes increases, this limitation can be mitigated by dividing large routing problems into smaller sub-routes. By processing these segments independently, it is possible to maintain high computational efficiency without exceeding the GPU's memory or thread capacity. This segmentation strategy ensures that GPU acceleration remains a scalable and practical solution even for complex graph traversal problems. Therefore, the combination of Bellman--Ford and CUDA~C provides a flexible framework that balances algorithmic precision with hardware-aware execution.

The numerical experiments demonstrate that the proposed clustering--RDS framework remains robust under a wide range of order variability and warehouse dynamics. In smaller-scale experiments, performance declines as stochastic noise increases, yet clustering effects remain statistically significant and clearly measurable through both geometric and silhouette-based indicators. In the large-scale setting with $100\,000$ nodes, the method exhibits strong improvements in cluster compactness and separation across all noise levels, showing that its core mechanisms scale effectively even when orders occupy only a small fraction of total capacity. This robustness arises from the adaptive interaction between clustering and relocation: as orders fluctuate, the RDS formulation continuously rebalances the spatial distribution of products to maintain compact storage regions. Such behavior mirrors the adaptive, self-organizing tendencies of modern fulfilment systems, in which storage assignments evolve dynamically in response to changing demand.

While the framework captures key structural dynamics of warehouse reorganization, it relies on several simplifying assumptions that delimit its immediate operational realism. In particular, the model currently assumes single-SKU storage locations (one article type per node), and that articles may be relocated freely at each iteration. These abstractions allow the effects of clustering and route optimization to be studied in isolation, providing a clear baseline for understanding how spatial reorganization improves efficiency. In practice, relocations are constrained by available labour and may involve non-negligible delays. Such constraints can be incorporated in at least two ways: by placing explicit limits on the number of relocations per iteration (labour capacity constraints), or by assigning a relocation cost or delay to each move (time-based constraints). As discussed in Section~2.1, the mathematical formulation can be extended to incorporate multi-SKU nodes, which together with extensions such as relocation costs and time-based capacity limits represent a natural next step toward operational deployment of the model in real fulfilment environments.

In addition, replenishment---the restocking of depleted picking locations from reserve storage---is implicitly represented in our model through the relocation mechanism itself: when inventory of a given article type at a node is exhausted, the model replenishes it dynamically by assigning a new storage location based on the current clustering configuration. This can be interpreted as a form of adaptive, demand-driven replenishment rather than a static restocking process. Alternatively, replenishment operations could be modeled explicitly as delayed or capacity-limited relocation events, which would constitute natural extensions of the present model and could be integrated with our clustering framework in future work.

Another direction of future research concerns energy-aware warehouse operations, where sustainable storage assignment and system design are increasingly studied in the context of automated systems~\cite{rizqi2024energy}; such considerations could in principle be combined with clustering-based reorganization to jointly optimize efficiency and sustainability.

Comparable principles are already applied in industry.
For instance, Amazon’s robotic picking algorithms dynamically re-cluster storage units based on changing order patterns  \cite{allgor2023algorithm}, while Alibaba’s GPU-accelerated routing systems achieve real-time optimization at large scale \cite{hu2022alibaba}.
These examples demonstrate that the computational and organizational mechanisms analyzed here align with successful industrial practices, supporting the practical viability of the proposed framework.

Future research should also extend the framework toward operational deployment by embedding labour and equipment constraints directly into the RDS model through capacity or cost terms, enabling realistic scheduling of human or robotic pickers. Further, coupling the model with digital-twin or WMS simulation platforms would facilitate real-time decision support, enabling policy evaluation and benchmarking. Empirical validation on real warehouse datasets---beyond the synthetic but representative instances studied here---would establish quantitative benchmarks for cost savings and throughput gains, paving the way for practical adoption in large-scale industrial settings.

\subsection*{Managerial implications and insights}

From a practical standpoint, these findings translate into several actionable insights for warehouse managers. The proposed framework serves as a strategic decision-support tool for warehouse planners, logistics engineers, and managers responsible for optimizing storage layouts and order-picking efficiency. By integrating dynamic clustering with route optimization, it provides quantitative insight into how reorganizing stock locations can improve picking performance as demand patterns evolve. The model can be embedded into a WMS platform or used in an offline planning environment to evaluate alternative zoning and replenishment strategies before physical implementation, thereby identifying configurations that minimize travel distances and enhance space utilization.

The numerical experiments demonstrate that the clustering-based reorganization substantially improves order consolidation and reduces average route length, with the approximate GPU-accelerated routing procedure yielding paths within 2--14\% of the optimal solution while maintaining sub-second runtimes for problem sizes up to $10^5$ nodes. These results indicate that dynamic, data-driven reallocation of stock locations---implemented even periodically rather than continuously---can produce meaningful operational gains without major infrastructure changes. Compared to traditional static ABC zoning, the proposed method adapts automatically to shifting order profiles, maintaining compact and well-separated product clusters even under moderate noise conditions.

From a managerial perspective, the framework’s simplicity and modularity make it well suited for exploratory “what-if” analyses in digital-twin environments or as an analytical layer within existing WMS platforms. Implementation requires only basic positional data and historical order information, making it both accessible and interpretable for practitioners. In practice, further calibration would be needed to incorporate labour availability, relocation costs, and replenishment schedules---elements that can be introduced as constraints or penalty terms in future extensions. Overall, the framework provides a scalable, computationally efficient foundation for developing adaptive and energy-aware storage assignment policies in next-generation smart warehouses. Taken together, these results demonstrate how mathematically grounded models can directly inform data-driven warehouse management, bridging analytical rigor and managerial practice.

\appendix

\section{WMS loop}\label{sec:WMS}
Here we describe the algorithms of the WMS.
The graph $G$ has all information about the position of each product, its current inventory balance, as well as empty positions that can be used for re-positioning a product so that it can be situated closer to a cluster center. (In the mathematical language of \S\ref{sec:model}, the graph is thus a state $x\in X$.) The order graph consists of similar setup as the graph $G$, with the difference that this graph only has the order content (each article number) and the ordered amount of each product. (The order graph is thus an order $o$.) The WMS loop has one purpose, to enable orders that belong to a cluster to come closer to each other. This will in an ideal operating condition result in areas in the warehouse where only orders belonging to a specific cluster is situated, see Figure \ref{fig1and2}.

For clarity, Table~\ref{tab:notation_algorithms} summarizes the notation used in Algorithms~\ref{alg:process_order}--\ref{alg:main}.

\begin{table}[htb!]
\centering
\caption{Notation used in Algorithms~\ref{alg:process_order}--\ref{alg:main}.}
\label{tab:notation_algorithms}
\begin{tabular}{ll}
\toprule
\textbf{Symbol / Function} & \textbf{Description} \\
\midrule
$G$ & Warehouse graph containing product positions and balances \\
$order\_id$ & Identifier of the current order being processed \\
$ordergraf$ & Order graph containing article numbers and ordered quantities \\
$order\_index$ & Total number of orders in the current batch \\
$n\_repetitioner$ & Number of repetitions (iterations) of the main loop \\
$order\_string$ & Encoded string representation of an order \\
$test\_pos$ & Position index used for testing or stock updates \\
$next\_order\_string$ & Encoded representation of the subsequent order \\
$empty\_nodes$ & Set of empty warehouse nodes available for relocation \\
$blocked\_orders$ & Set of orders blocked due to insufficient stock \\
$specific\_point$ & Cluster center associated with the current order \\
$distance$ & Distance list between empty nodes and cluster center \\
$distance2$ & Distance list between blocked orders and cluster center \\
$min\_distance$ & Minimum distance among candidate relocation nodes \\
\midrule
\texttt{ProcessOrder()} & Converts an order ID into a string and updates stock levels \\
\texttt{UpdateStock()} & Updates inventory in $G$ based on the processed order \\
\texttt{CheckStock()} & Returns \textbf{True} if sufficient stock exists for an order \\
\texttt{FindEmptyNodes()} & Identifies empty nodes available for article relocation \\
\texttt{CalculateDistance()} & Computes distances between node sets and a reference point \\
\texttt{MoveArticle()} & Moves an article from one node to another \\
\texttt{CheckStockAndMove()} & Combines stock checking and relocation to balance inventory \\
\texttt{Main()} & Main WMS loop coordinating order processing and relocation \\
\bottomrule
\end{tabular}
\end{table}

The \textit{Order Processing} algorithm converts an order ID into a string representation and determines the position to test. It then updates the stock based on the order string and increments the pick count in the graph $G$, see Algorithm \ref{alg:process_order}.

\begin{algorithm} 
\SetKwInput{KwInput}{Input}
\SetKwInput{KwOutput}{Output}
\DontPrintSemicolon

\KwInput{order\_id, ordergraf}
\KwOutput{order\_string, test\_pos}

\SetKwFunction{FProcessOrder}{ProcessOrder}
\SetKwFunction{FUpdateStock}{UpdateStock}

\SetKwProg{Fn}{Function}{:}{}
\Fn{\FProcessOrder{$order\_id, ordergraf$}}{
    $order\_string, test\_pos \leftarrow order\_to\_string(order\_id, ordergraf)$\;
    \FUpdateStock{$G, order\_string$}\;
    $increment\_pick\_count(G, order\_string)$\;
}
\caption{Order Processing \label{alg:process_order}}
\end{algorithm}

The \textit{Update Stock} algorithm updates the stock in the graph $G$ based on the provided order string. It ensures that the inventory levels are adjusted according to the orders processed, see Algorithm \ref{alg:update_stock}.

\begin{algorithm} 
\SetKwInput{KwInput}{Input}
\SetKwInput{KwOutput}{Output}
\DontPrintSemicolon

\KwInput{G, order\_string}
\KwOutput{None}

\SetKwFunction{FUpdateStock}{UpdateStock}

\SetKwProg{Fn}{Function}{:}{}
\Fn{\FUpdateStock{$G, order\_string$}}{
    $update\_stock\_from\_order(G, order\_string)$\;
}
\caption{Update Stock \label{alg:update_stock}}
\end{algorithm}

The \textit{Check Stock} algorithm verifies if there is sufficient stock in the graph $G$ to fulfill the next order string. It returns a Boolean value indicating whether the stock is sufficient, see Algorithm \ref{alg:check_stock}.

\begin{algorithm}
\SetKwInput{KwInput}{Input}
\SetKwInput{KwOutput}{Output}
\DontPrintSemicolon

\KwInput{G, next\_order\_string}
\KwOutput{Boolean}

\SetKwFunction{FCheckStock}{CheckStock}

\SetKwProg{Fn}{Function}{:}{}
\Fn{\FCheckStock{$G, next\_order\_string$}}{
    \KwRet $is\_sufficient\_stock\_for\_order(G, next\_order\_string)$\;
}
\caption{Check Stock \label{alg:check_stock}}
\end{algorithm}

The \textit{Check Stock and Move Articles} algorithm first checks if there is sufficient stock for the next order. If the stock is insufficient, it identifies empty nodes and calculates the distances to determine the optimal movement of articles to balance the stock, see Algorithm \ref{alg:check_and_move}.

\begin{algorithm} 
\SetKwInput{KwInput}{Input}
\SetKwInput{KwOutput}{Output}
\DontPrintSemicolon

\KwInput{G, ordergraf, order\_id}
\KwOutput{None}

\SetKwFunction{FCheckStock}{CheckStock}
\SetKwFunction{FCheckStockAndMove}{CheckStockAndMove}
\SetKwFunction{FFindEmptyNodes}{FindEmptyNodes}
\SetKwFunction{FCalculateDistance}{CalculateDistance}
\SetKwFunction{FMoveArticle}{MoveArticle}

\SetKwProg{Fn}{Function}{:}{}
\Fn{\FCheckStockAndMove{$G, ordergraf, order\_id$}}{
    $next\_order\_string \leftarrow order\_to\_string(order\_id + 1, ordergraf)$\;
                
    \If{\FCheckStock{$G, next\_order\_string$} = \textbf{False}}{
        $empty\_nodes \leftarrow \FFindEmptyNodes{$G$}$\;
        $specific\_point \leftarrow ordergraf.nodes[order\_id].get('cluster\_center', 'Unknown')$\;
        $blocked\_orders \leftarrow update\_stock\_from\_order2(G, next\_order\_string)$\;
        $distance \leftarrow \FCalculateDistance{$empty\_nodes, specific\_point$}$\;
        $min\_distance \leftarrow find\_min(distance)$\;
        $distance2 \leftarrow \FCalculateDistance{$blocked\_orders, specific\_point$}$\;
                    
        \For{$i \leftarrow 0$ \KwTo $length(distance2) - 1$}{
            $test\_pos \leftarrow distance2[i][1] > min\_distance[1]$\;
                        
            \If{$test\_pos$}{
                \FMoveArticle{$G, distance2[i][0], min\_distance[0], blocked\_orders[i][3]$}\;
            }
        }
    }
}
\caption{Check stock and Move Articles \label{alg:check_and_move}}
\end{algorithm}

The \textit{Main Order Processing Algorithm} coordinates the overall process of handling orders. It iterates through a specified number of repetitions and processes each order by converting it to a string, updating the stock, and checking the stock to move articles as necessary, see Algorithm \ref{alg:main}.

\begin{algorithm} 
\SetKwInput{KwInput}{Input}
\SetKwInput{KwOutput}{Output}
\DontPrintSemicolon

\KwInput{ordergraf, G, order\_index, n\_repetitioner}
\KwOutput{None}

\SetKwFunction{FMain}{Main}
\SetKwFunction{FProcessOrder}{ProcessOrder}
\SetKwFunction{FCheckStockAndMove}{CheckStockAndMove}

\SetKwProg{Fn}{Function}{:}{}
\Fn{\FMain}{
    \For{$repetition \leftarrow 0$ \KwTo $n\_repetitioner - 1$}{
        \For{$order\_id \leftarrow 1$ \KwTo $order\_index$}{
            \FProcessOrder{$order\_id, ordergraf$}\;
            
            \If{$order\_id < number\_of\_orders$}{
                \FCheckStockAndMove{$G, ordergraf, order\_id, order\_string$}\;
            }
        }
    }
}
\caption{Main Order Processing Algorithm \label{alg:main}}
\end{algorithm}

\bibliographystyle{amsplain}
\bibliography{FoU_INDEK}

\end{document}